\documentclass[10pt]{amsart}

\usepackage{amsmath,amssymb,setspace,nicefrac}
\usepackage[active]{srcltx}

\usepackage{amsmath,amssymb,setspace,nicefrac}
\usepackage[active]{srcltx}
\usepackage[pagebackref, colorlinks, linkcolor=red, citecolor=blue, urlcolor=blue, hypertexnames=true]{hyperref}
\usepackage[backrefs]{amsrefs}

\setlength{\textwidth}{15cm} \setlength{\textheight}{21cm}
\setlength{\oddsidemargin}{0.8cm} \setlength{\topmargin}{0cm}
\setlength{\evensidemargin}{0.8cm} \setlength{\topmargin}{0cm}
\setlength{\parindent}{0pt}

\usepackage{amsmath,enumerate,amsxtra,amssymb,latexsym,amscd,amsthm,float,indentfirst}
\usepackage[mathscr]{eucal}
\usepackage[all]{xy}
\usepackage{setspace}
\usepackage{CJK}
\usepackage{tabu, subfigure}
\usepackage{makecell}
\usepackage{mathrsfs}
\usepackage{lineno,hyperref}

\newtheorem{theorem}{Theorem}[section]
\newtheorem*{mthm}{Main Theorem}
\newtheorem{definition}[theorem]{Definition}
\newtheorem{proposition}[theorem]{Proposition}

\newtheorem{lemma}[theorem]{Lemma}
\newtheorem{remark}[theorem]{Remark}

\newcommand{\R}{\mathbb{R}}
\newcommand{\Z}{\mathbb{Z}}
\newcommand{\N}{\mathbb{N}}

\newcommand{\nx}{\mathcal{U}}
\newcommand{\poincare}{Poincar\'e }
\linespread{1.2}

\keywords{Chain transitive sets, dominated splitting, hyperbolicity, robustly shadowing.}
\subjclass[2010]{37D40, 37C50 }

\begin{document}

\title[Robustly shadowable chain transtive sets  and hyperbolicity]{Robustly shadowable chain transitive sets \\  and hyperbolicity }
\author{MOHAMMAD REZA BAGHERZAD}
\address{Behin Andishan Aayyar research group, Sarparast st, Taleghani st, Tehran 1616893131, Iran.} 
\email{mbagherzad@gmail.com}
\author{Keonhee Lee}
\address{Keonhee Lee, Department of Mathematics, Chungnam National University, Daejeon 305-764, Korea.}
\email{khlee@cnu.ac.kr}
\date{\today}
\maketitle

\begin{abstract}
We say that a compact invariant set $\Lambda$ of a $C^1$-vector field $X$ on a compact boundaryless Riemannian manifold $M$ is robustly shadowable if it is locally maximal with respect to a neighborhood $U$ of $\Lambda$, and there exists a $C^1$-neigborhood $\nx$ of $X$  such that for any $Y \in \nx$, the continuation $\Lambda_Y$ of $\Lambda$ for $Y$ and $U$ is shadowable for $Y_t$. In this paper, we prove that any chain transitive set of a $C^1$-vector field on $M$ is hyperbolic if and only if it is robustly shadowable.

\end{abstract}


\onehalfspace
\section{Introduction}

\hspace{2mm}
The main goal of the study of differentiable dynamical systems is to understand the structure of the orbits of vector fields (or diffeomorphisms) on a compact boundaryless Riemannian manifold. To descirbe the dynamics on the underlying manifold, it is usual to use the dynamic properties on the tangent bundle such as hyperbolicity and dominated splitting. A fundamental problem in recent years is to study the influence of a robust dynamic property (i.e., property that holds for a given system and all $C^1$-nearby systems) on the behavior of the tangent map on the tangent bundle (e.g., see  \cite{GLT, LS, LTW, LGW,  PT}).

\hspace{2mm}
Recently, several results dealing with the influence of a robust dynamics property of a  $C^1$-vector field were appeared. For instance, Lee and Sakai \cite{LS} proved that a nonsingular vector field $X$ is robustly shadowable (i.e., $X$ and its $C^1$-nearby systems are shadowable) if and only if it satisfies both Axiom $A$ and the strong transversality condition (i.e., it is structurally stable). Afterwards, Pilyugin and Tikhomirov \cite{PT} gave a description of robustly shadowable  oriented vector fields which are structurally stable. In particular, it is proved in \cite{LTW} that any robustly shadowable chain component $C_X(\gamma)$ of $X$ containing a hyperbolic periodic orbit $\gamma$ does not contain a hyperbolic singularity, and it is hyperbolic if $C_X(\gamma)$ has no non-hyperbolic singularity. Here we say that the chain component $C_X(\gamma)$ is {\it robustly shadowable} if there is a $C^1$-neighbohood $\nx$ of $X$ such that for any $Y \in \nx$, the continuation $C_Y(\gamma_Y)$ of $Y$ containing $\gamma_Y$ is shadowable for $Y_t$, where $\gamma_Y$ is the continuation of $\gamma$ with respect to $Y$.
Very recently, Gan {\it et al.} \cite{GLT} showed that the set of all robustly shadowable oriented vector fields is contained in the set of vector fields with $\Omega$-stability. In this direction, the following question is still open: {\it if the chain component $C_X(\gamma)$ of a $C^1$-vector field $X$ on  a compact boundaryless Riemannian manifold $M$ containing a hyperbolic periodic orbit $\gamma$ is robustly shadowable, then is it hyperbolic? }

\hspace{2mm} 
In this paper, we study the dynamics of robustly shadowable chain transitive sets. More precisely, we prove that any chain transitive set of a vector field $X$ is  hyperbolic if and only if it is robustly shadowable. For this, we first show that if a compact invariant set $\Lambda \subset M$ is robustly shadowable then every singularity and periodic orbit in $\Lambda_Y$ are hyperbolic for $Y_t$, where $\Lambda_Y$ is the continuation of  $\Lambda$ with repect to a $C^1$-nearby vector field $Y$. Moreover, we see that any robustly shadowable chain transitive set  $\Lambda$ does not contain a singularity. Finally we show that $\Lambda$ admits a dominated splitting, and it is indeed a hyperbolic splitting.


\hspace{2mm} 
Now we round out the introduction with some notations, definitions and main theorem which we will use throughout the paper.
Let $M$ be a compact boundaryless Riemannian manifold with dimension $n$. Denote by
$\mathcal{X}^1(M)$ the set of all $C^1$ vector fields of $M$ endowed
with the $C^1$ topology. Then every $X\in\mathcal{X}^1(M)$ generates
a $C^1$ flow $X_t:M\times \mathbb{R}\to M$, that is, a family of diffeomorphisms on $M$ such that $X_s\circ X_t=X_{t+s}$ for all $t,s\in\mathbb R$, $X_0=Id$ and $\frac{dX_t}{dt}\vert_{t=0}=X(p)$ for any $p\in M$. Throughout the paper, for
$X,Y,\ldots \in \mathcal{X}^1(M)$, we always denote the generated
flows by $X_t, Y_t, \ldots$, respectively. For $x\in M$, let us denote the orbit $\{X_t(x), t\in\mathbb R\}$ of the flow $X_t$ (or $X$) through $x$ by $orb(x,X_t)$, or $O(x)$ if no confusion is likely.
We say that a point $x\in M$ is a {\it singularity} of $X$ if $X(x)=0$; and an orbit $O(x)$ is {\it closed} (or {\it periodic}) if it is diffeomorphic to a circle $S^1$. Let $d$ be the distance induced from the Riemannian structure on $M$. A
sequence $\{(x_i,t_i):x_i\in M; t_i\geq 1; a<i<b\}$ ($-\infty \leq a < b \leq \infty$) is called a  $\delta$-{\it pseudo orbit} (or a $\delta$-{\it chain}) of $X_t$ if for any $a<i<b-1$,
$$d(X_{t_i}(x_i),x_{i+1})<\delta.$$ 

Roughly speaking, a pseudo orbit is composed by a set of segments of real orbits. We need the restriction $t_i\geq 1$ because without this, for any $\delta>0$, all points $x,y \in M$ can be connected by a $\delta$-pseudo orbit.

\hspace{2mm} 
Let {\it Rep} be the set of all increasing homeomorphisms (called {\it reparametrizations}) $h:\R\rightarrow \R$ such that $h(0)=0.$ We say that a compact invariant
set $\Lambda$ of $X_t$ is {\it shadowable}
if for any $\varepsilon>0$, there
is $\delta>0$ satisfying the following property: given any
$\delta$-pseudo orbit $\{(x_i,t_i):-\infty \leq i \leq \infty\}$ in $\Lambda$,
there exist a point $y\in M$ and $h \in {\it Rep}$ such that for all $t \in \R$ we have 
$$ d(X_{h(t)}(y),x_0*t)<\varepsilon,$$
where $x_0*t=X_{t-S_i}(x_i)$ for any $t \in [S_i,~S_{i+1}]$, and $S_i$ is given by
$$
S_i=
\begin{cases}
\sum_{j=0}^{i-1}t_j&\quad\mbox{for}\quad i>0,\\
0  &\quad\mbox{for}\quad i=0, \\
-\sum_{j=i}^{-1}t_j&\quad\mbox{for}\quad i<0.
\end{cases}
$$

Note that the above concept of pseudo orbit is slightly different from that of pseudo orbit in \cite{LS, PT}. However we point out here that a compact invariant set $\Lambda$  is shadowable for $X_t$ under the above definition if and only if it is shadowable for $X_t$ under the definition in \cite{LS, PT}.
A point $x\in M$ is called {\it chain recurrent} if for any
$\delta>0$, there exists a $\delta$-pseudo orbit $\{(x_i,t_i):0\leq
i<n\}$ with $n>1$ such that $x_0=x$ and
$d(X_{t_{n-1}}(x_{n-1}),x)<\delta$. The set of all chain recurrent
points of $X_t$ is called the {\it chain recurrent set} of $X_t$, and denote it by $CR(X_t)$.  For any $x,y\in M$, we say that $x\sim y$, if
for any $\delta>0$, there are a $\delta$-pseudo orbit
$\{(x_i,t_i):0\leq i<n\}$ with $n>1$ such that $x_0=x$ and
$d(X_{t_{n-1}}(x_{n-1}),y)<\delta$ and a $\delta$-pseudo orbit
$\{(x'_i,t'_i):0\leq i<m\}$ with $m>1$ such that $x'_0=y$ and
$d(X_{t'_{n-1}}(x'_{n-1}),x)<\delta$. It is easy to see that $\sim$
gives an equivalence relation on the set $CR(X_t)$. An equivalence class of
$\sim$ is called a {\it chain component} of $X_t$ (or $X$).
We say that a compact invariant set $\Lambda$ of $X_t$ is {\it chain transitive} if for any $x,y \in \Lambda$ and any $\delta>0$, there is a $\delta$-pseudo orbit $\{(x_i,t_i) \in \Lambda \times\R \mid  t_i\geq1, 0 \leq i <n\}$ with $n>1$ such that $x_0=x$ and $d(X_{t_{n-1}}(x_{n-1}),y)<\delta$.

\hspace{2mm}
A compact invariant set $\Lambda$ of $X_t$ is
called {\it hyperbolic} if there are constants $C>0$ and $\lambda>0$
such that the tangent flow $DX_t:T_\Lambda M\to T_\Lambda M$ leaves
a continuous invariant splitting $T_\Lambda M=E^s\oplus \langle X\rangle\oplus E^u$ satisfying
$$\left\|DX_t|_{E^s(x)}\right\|\leq Ce^{-\lambda t}\quad\mbox{and}\quad \left\|DX_{-t}|_{E^u(x)}\right\|\leq Ce^{-\lambda t}$$
for any $x\in\Lambda$ and $t>0$, where $\langle X\rangle$ denotes the subspace generated by the vector field $X$.
For any hyperbolic closed orbit $\gamma$, the sets $$W^s(\gamma)= \{x\in M: X_t(x) \to \gamma ~{\rm as}~ t \to \infty \} ~ \text{and} $$ 
$$W^u(\gamma)= \{x\in M: X_t(x) \to \gamma ~{\rm as}~ t \to -\infty \}$$
are said to be the {\it stable manifold} and {\it unstable manifold} of $\gamma$, respectively.
 We say that the dimension of the stable manifold $W^s(\gamma)$ of $\gamma$ is the {\it index} of $\gamma$, and denoted by $ind(\gamma)$.

\hspace{2mm}
The {\it homoclinic class} of $X_t$ associated to $\gamma$, denoted by $H_X(\gamma)$, is defined as the closure of the transversal intersection of the stable and unstable manifolds of $\gamma$, that is;
$$H_X(\gamma)=\overline{W^s(\gamma)\pitchfork W^u(\gamma)}. $$
By definition, we easily see that the set is closed and $X_t$-invariant. Let $C_X(\gamma)$ be the chain component of $X_t$ containing a hyperbolic periodic orbit $\gamma$. Then we have $H_X(\gamma)\subset C_X(\gamma)$, but the converse is not true in general.
For two hyperbolic closed orbits $\gamma_1$ and $\gamma_2$ of $X_t$, we
say $\gamma_1$ and $\gamma_2$ are {\it homoclinically related}, denoted by $\gamma_1\sim\gamma_2$, if
$$W^s(\gamma_1)\pitchfork W^u(\gamma_2)\neq \emptyset ~ \text{and} ~
W^s(\gamma_2)\pitchfork W^u(\gamma_1)\neq \emptyset.$$ 
By Birkhoff-Smale's theorem (see \cite{AP}), we know that $$H_X(\gamma)=\overline{\{\gamma':\gamma'\sim\gamma\}}.$$

\hspace{2mm}
A point $x\in M$ is called {\it nonwandering} if for any neighborhood $U$ of $x$, there is $t\geq 1$ such that $X_t(U)\cap U\not= \emptyset$.
The set of all nonwandering points of $X_t$ is called the {\it nonwandering set} of $X_t$, denoted by $\Omega(X_t)$. Let $Sing(X)$ be the set of all singularities of $X$, and let $PO(X_t)$ be the set of all periodic orbits (which are not singularities) of $X_t$. Clearly we have 
$$Sing(X)\cup PO(X_t)\subset \Omega(X_t) \subset CR(X_t).$$
 We say that $X$ satisfies {\it Axiom A} if $PO(X_t)$ is dense in $\Omega(X_t)\setminus Sing(X)$, and $\Omega(X_t)$ is hyperbolic for $X_t$.
A point $y \in M$ is said to be an {\it $\omega$ limit point} of $x$ if there exists a sequence $t_i\rightarrow +\infty$ such that $X_{t_i}(x)\rightarrow y$. Denote the set of all omega limit points of $x$ by $\omega(x)$. We say that a compact invariant set $\Lambda$ of $X_t$ is {\it transitive} if there is $x\in \Lambda$ such that $\omega(x)=\Lambda$.

\hspace{2mm}
Let $\Lambda$ be a compact invariant set of $X_t$. For any $C^1$-close $Y$ to $X$ and a neighbourhood $U$ of $\Lambda$, the set $$\Lambda_Y=\displaystyle\bigcap_{t \in \R}Y_t(U)$$ is called the {\it continuation} of $\Lambda$ for $Y$ and $U$. If there exists a neighbourhood $U$ of $\Lambda$ satisfying $\Lambda=\bigcap_{t \in \R}X_t(U),$ then we say that $\Lambda$ is {\it locally maximal} with respect to $U$, and $U$ is called an {\it isolating block} of $\Lambda$. Let $\gamma$ be a hyperbolic closed orbit of $X_t$. Then we know that there are a $C^1$ neighbourhood $\nx$ of $X$ and a neighbourhood $U$ of $\gamma$ such that for any $Y \in \nx$, there is a unique hyperbolic closed orbit $\gamma_Y$ in $U$ which is equal to  the set $\bigcap_{t \in \R}Y_t(U)$. Note that every $\gamma_Y$ is locally maximal with respect to $U$. The chain component of $Y \in \nx$ containing the continuation $\gamma_Y$  will be denoted by $C_Y(\gamma_Y)$. 

\hspace{2mm}
Now we give the definition of robust shadowability for invariant sets of vector fields.

\begin{definition}
We say that a compact invariant set $\Lambda$ of $X_t$ is {\it robustly shadowable} if it has an isolating block $U$, and there exists a $C^1$-neighborhood $\nx$ of $X$ such that for any $Y \in \nx$, the continuation $\Lambda_Y$ for $Y$ and $U$ is shadowable for $Y_t$. Here $\nx$ is said to be an admissible neighborhood of $X$ with repsect to $\Lambda$.
\end{definition}

\hspace{2mm}
In this paper, we prove the following main theorem.

\begin{mthm}
Let $X \in \mathcal{X}^1(M)$, and let $\Lambda$ be a compact, invariant and  chain transitive set for $X_t$. Then  $\Lambda$ is hyperbolic if and only if it is robustly shadowable.
\end{mthm}

\medskip


\section{Linear \poincare flows and quasi hyperbolic orbit arcs}

\hspace{2mm}
Hereafter we assume that the exponential map $$\exp_p:T_pM(1)\to M$$ is well defined for all
$p\in M$, where $T_pM(r)$ denotes the $r$-ball $\{v\in T_pM: \|v\|\leq r\}$ in $T_pM$. For any regular point $x\in M$ (i.e., $X(x)\neq \bold{0}$), we let
$$N_x=({\rm span\ } X(x))^\perp\subset T_xM,$$ and
$N_x(r)$ the $r$-ball in $N_x$. Let
$\hat{N}_{x,r}=\exp_x(N_x (r))$. Given any regular point $x\in M$ and
$t\in \mathbb{R}$, we can take a constant $r>0$ and a $C^1$ map $\tau:
\hat{N}_{x,r}\to \mathbb{R}$  such that  $\tau(x)=t$ and
$X_{\tau(y)}(y)\in \hat{N}_{X_t(x),1}$ for any $y\in\hat{N}_{x,r}$.
Now we define the {\it Poincar\'e map}
$$f_{x,t}:\hat{N}_{x,r} \to \hat{N}_{X_t(x),1}, ~~ f_{x,t}(y)=X_{\tau(y)}(y)$$
for $y \in \hat{N}_{x,r}.$
Let $M_X=\{x\in M: X(x)\neq \bold{0}\}$. Then it is easy to check that for any
fixed $t$ there exists a continuous map $r_0: M_X\to (0,1)$ such that
for any $x\in M_X$, the Poincar\'e map $f_{x,t}:
\hat{N}_{x,r_0(x)}\to \hat{N}_{X_t(x),1}$ is well defined and the
respective time function $\tau(y)$ satisfies  $2t/3<\tau(y)<4t/3$ for $y\in\hat{N}_{x,r_0(x)}$.

\hspace{2mm}
Let $t_0$ be fixed. At each $x\in M_X$, one can consider a
{\it flow box chart}  $(\hat{U}_{x,t_0,\delta}, F_{x,t_0})$ at $x$ such that
$$\hat{U}_{x,t_0,\delta}=\{tX(x)+y: 0\leq t\leq t_0, y\in N_x(\delta)\} \subset T_xM,$$
where $F_{x,t_0}:\hat{U}_{x,t_0,\delta}\to M$ is defined by
$F_{x,t_0}(tX(x)+y)=X_t(\exp_xy)$. Then it is well known that if
$X_t(x)\neq x$ for any $t\in(0,t_0]$, then there is $\delta>0$ such
that $F_{x,t_0}:\hat{U}_{x,t_0,\delta}\to M$ is an embedding.

For $\varepsilon>0$ and $r>0$, let $\mathcal{N}_{\varepsilon}(\hat{N}_{x,r})$
be the set of all diffeomorphisms $\phi:\hat{N}_{x,r}\to
\hat{N}_{x,r}$ such that 
$$supp(\phi)\subset \hat{N}_{x,r/2} ~ \text{and} ~ d_{C^1}(\phi,id)<\varepsilon. $$
Here $d_{C^1}$ is the usual $C^1$
metric, $id$ denotes the identity map and the $supp(\phi)$ is the
closure of the set of points where it differs from $id$.

\begin{proposition}\label{PR}
Let $X\in\mathcal{X}^1(M)$, and let $\mathcal{U}\subset \mathcal{X}^1(M)$
be a neighborhood of $X$. For any constant $t_0>0$, there
are a constant $\varepsilon>0$ and a $C^1$-neighborhood $\mathcal{V}$ of
$X$ such that for any $Y\in \mathcal{V}$, there exists a continuous
map $r:M_Y\to(0,1)$ satisfying the following property$:$ for any $x\in M_Y$ satisfying $Y_t(x)\neq x$ for $0<t\leq 2t_0$ and any
$\phi\in\mathcal{N}_\varepsilon(\hat{N}_{x,r(x)})$, there is
$Z\in\mathcal{U}$ such that $Y(z)=Z(z)$ for all $z\in M\backslash
F_x(\hat{U}_x)$ and $Z_t(y)=Y_t(\phi(y))$ for any $y\in \hat{N}_{x,r(x)}$ and $2t_0/3<t<4t_0/3$,
where $F_x(\hat{U}_x)$ is the flow box of   $Y$ at $x$.
\end{proposition}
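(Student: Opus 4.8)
The plan is to realise the perturbation $\phi$ on the section $\hat{N}_{x,r(x)}$ as a $C^1$-small perturbation $Z$ of $Y$ supported in a flow box of $Y$ at $x$, obtained by ``spreading $\phi$ out along the flow direction'' inside that box; all constants $K,C,\dots$ below depend only on $X$ and $t_0$.

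First I would set up \emph{uniform} flow boxes. For $Y$ in a small $C^1$-neighbourhood $\mathcal V$ of $X$ and $x\in M_Y$ with $Y_t(x)\neq x$ for $0<t\le 2t_0$, take the flow box chart of $Y$ at $x$ on $\hat{U}_x=\{\,tY(x)+y:\ 0\le t\le 3t_0/2,\ y\perp Y(x),\ \|y\|\le\delta(x)\,\}$ with $F_x(tY(x)+y)=Y_t(\exp_x y)$, exactly as in the paragraph preceding the statement; the hypothesis makes this map an embedding for a suitable cross-section radius $\delta(x)$, and since $M$ is compact and $Y$ stays in a $C^1$-bounded set one may take $\delta$ continuous in $x$, bounded below on compact parts of the region where it is defined, and with $F_x$, $F_x^{-1}$ and their first two derivatives bounded by a single $K$. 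In this chart $Y=\partial_t$ and $Y_\tau$ is the translation $(t,y)\mapsto(t+\tau,y)$. Choosing $r(x)\le\delta(x)$ so that $\hat{N}_{x,r(x)}$ is the slice $\{t=0,\ \|y\|\le r(x)\}$ and extending $r$ to a continuous map $M_Y\to(0,1)$ (nothing is imposed away from that region) produces the desired $r$; this is the analogue of the map $r_0$ already built in the text.

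Next, given $\phi\in\mathcal N_\varepsilon(\hat{N}_{x,r(x)})$, read in the normal chart it is joined to the identity by the straight-line isotopy $\phi_u=(1-u)\operatorname{id}+u\phi$, $u\in[0,1]$: for $\varepsilon$ small every $\phi_u$ is a diffeomorphism supported in $\hat{N}_{x,r(x)/2}$ with $d_{C^1}(\phi_u,\operatorname{id})<\varepsilon$, and $\partial_u\phi_u=\phi-\operatorname{id}$ has $C^1$-size $<\varepsilon$. Fix a smooth $\rho:[0,3t_0/2]\to[0,1]$ with $\rho\equiv0$ near $0$, $\rho\equiv1$ on $[2t_0/3,\,3t_0/2]$ and $|\rho'|\le C/t_0$, put $\Psi_s=\phi_{\rho(s)}$, and define $Z$ to equal $Y$ off $F_x(\hat{U}_x)$ and, in the chart, to be the field $Z=(1,V)$ whose orbit through $(0,y_0)$ is $s\mapsto(s,\Psi_s(y_0))$; thus $V(s,y)=\rho'(s)\,(\phi-\operatorname{id})(\Psi_s^{-1}(y))$. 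Since $\rho'\equiv0$ near the $t=0$ and $t=3t_0/2$ faces and $(\phi-\operatorname{id})\circ\Psi_s^{-1}$ is supported in $\{\|y\|\le r(x)/2\}$, the field $V$ vanishes near $\partial\hat{U}_x$, so $Z$ is a genuine $C^1$ vector field agreeing with $Y$ outside the flow box; moreover $\|V\|_{C^1}\le C\varepsilon$ in the chart, hence $\|Z-Y\|_{C^1}\le C\varepsilon$ on $M$ after pulling back through $F_x$ and using the distortion bound $K$. Taking $\mathcal V$ with $\overline{\mathcal V}\subset\mathcal U$ and then $\varepsilon$ small enough that this bound is below the $C^1$-distance from $\overline{\mathcal V}$ to the complement of $\mathcal U$ forces $Z\in\mathcal U$. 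Finally, for $s$ in the window $[2t_0/3,4t_0/3]\subset[2t_0/3,3t_0/2]$ we have $\Psi_s=\phi$, so the $Z$-orbit through $(0,y_0)$ sits at $(s,\phi(y_0))$, which in the chart is exactly $Y_s(\phi(y_0))$ (note $\phi$ maps $\hat{N}_{x,r(x)}$ into itself and $Y_s$ is a translation); hence $Z_t(y)=Y_t(\phi(y))$ for all $y\in\hat{N}_{x,r(x)}$ and $2t_0/3<t<4t_0/3$.

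The one place needing real care is the uniformity in the first step: $\delta$ (hence $r$) and the distortion bound $K$ must be chosen \emph{simultaneously} over all admissible $x$ and all $Y\in\mathcal V$, so that ``$C^1$-small in the chart'' becomes ``$C^1$-small on $M$, uniformly'' and a single pair $(\varepsilon,\mathcal V)$ serves every $Y$; this rests on compactness of $M$ together with continuity of the flow-box data in $(x,Y)$. The remainder is soft, the only subtlety being that the straight-line homotopy stays within the diffeomorphisms and keeps its $C^1$-size below $\varepsilon$ — which is precisely where the hypothesis $d_{C^1}(\phi,\operatorname{id})<\varepsilon$ is used.
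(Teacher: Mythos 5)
Your construction --- spreading the normal perturbation $\phi$ along the flow inside a flow box via the linear isotopy $\phi_u=(1-u)\,\mathrm{id}+u\,\phi$ and a temporal cut-off $\rho$ --- is exactly the mechanism behind the cited result; the paper itself gives no argument beyond the reference to Pugh--Robinson, and your verifications of the support of $V$, of the invariance of $\hat{N}_{x,r(x)/2}$ under each $\Psi_s$, and of the identity $Z_t(y)=Y_t(\phi(y))$ on the window $(2t_0/3,4t_0/3)$ all check out.

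The one genuine gap is the sentence asserting that $F_x$, $F_x^{-1}$ ``and their first two derivatives'' are bounded by a single $K$. Here $X$, hence $Y$, is only $C^1$, so the flow $Y_t$ and therefore the flow-box chart $F_x(tY(x)+y)=Y_t(\exp_x y)$ are only $C^1$; the second derivative $D^2F_x$ need not exist. This is not cosmetic: your $Z$ equals $Y$ off the box and equals the pushforward $(F_x)_*(\partial_t+V)$ on it, so $Z-Y=(F_x)_*(0,V)=\bigl(DF_x\cdot(0,V)\bigr)\circ F_x^{-1}$. Since $DF_x$ is merely continuous, this pushforward is a priori only $C^0$: $Z$ need not be a $C^1$ vector field at all, and even where it is differentiable the estimate $\|Z-Y\|_{C^1}\le C\varepsilon$ cannot be obtained by the chain rule as you propose, because differentiating $DF_x\cdot(0,V)$ produces exactly the missing term $D^2F_x\bigl(V,\cdot\bigr)$. (If $X$ were assumed $C^2$, your argument would be complete.) Overcoming this loss of one derivative in the $C^1$ category is the technical content of the cited pages of Pugh--Robinson: the perturbation there is arranged so that the only objects differentiated twice are smooth auxiliary data (bump functions, the exponential chart), never the flow of $Y$ itself. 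Two minor further points you should record: the $C^1$ bound on $V$ also uses $|\rho''|$ (harmless, since $\rho$ is a fixed smooth function of $t_0$ alone) and a bound on $\partial_s\Psi_s^{-1}$; and the uniform lower bound on the cross-section radius $\delta(x)$ requires the hypothesis $Y_t(x)\neq x$ for $0<t\le 2t_0$ quantitatively, via compactness of $M$ and continuity in $(x,Y)$, as you indicate. Only the regularity issue is a genuine obstruction.
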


\begin{proof}
See \cite[p. 293--295]{PR}.
\end{proof}

\begin{remark}
In the above proposition, it is easy to see that if $\phi(x)=x$, then $f_{x,t_0}\circ\phi$ is the Poincar\'e map of $Z$, where $f_{x,t_0}:
\hat{N}_{x,r(x)}\to \hat{N}_{X_{t_0}(x),1}$ is the Poincar\'e map of
$Y$.
\end{remark}

\hspace{2mm}
For the study of stability conjecture (see \cite{GW}) posed by Palis and Smale, Liao \cite{L} introduced the
notion of linear Poincar\'e flow for a $C^1$-vector field as follows. Let
$\mathcal{N}=\bigcup_{x\in M_X}N_x$ be the normal bundle based on
$M_X$. Then we can introduce a flow (which is called a {\it linear Poincar\'e flow} for $X$)
$$\Psi_t :\mathcal{N}\to\mathcal{N}, ~\Psi_t|_{N_x}=\pi_{N_x}\circ D_xX_t|_{N_x},$$
where $\pi_{N_x}:T_xM\to N_x$ is the natural projection along the direction of $X(x)$, and $D_xX_t$ is the derivative map of $X_t$. Then we can see that
$$\Psi_t|_{N_x}=D_xf_{x,t}\quad\mbox{and}\quad f_{x,t}\circ\exp_x=\exp_{X_t(x)}\circ \Psi_t.$$

Using Proposition 2.1, we can prove the following lemma
which has the same philosophy with the Franks' Lemma for
diffeomorphisms. One can find another proof for the lemma in \cite{BGV}.

\begin{lemma}
Let $\mathcal{U}$ be a $C^1$ neighborhood of $X\in\mathcal{X}^1(M)$. For any $T>0$, there exists a constant $\eta>0$ such that
for any tubular neighborhood $U$ of an orbit arc
$\gamma=X_{[0,T]}(x)$ of $X_t$ and for any $\eta$-perturbation
$\mathcal{F}$ of the linear Poincar\'e flow $\Psi_T|_{N_x}$, there
exists a vector field $Y\in\mathcal{U}$ such that the linear Poincar\'e flow
$\tilde{\Psi}_T|_{N_x}$ associated to $Y$ coincides with
$\mathcal{F}$, and  $Y$ coincides with $X$ outside $U$ and
along $X_{[-t_1,t_2]}(x)$, where $t_1=\min\{t>0, X_{-t}(x)\in
\partial{U}\}$ and $t_2=\min\{t>0,X_t(x)\in\partial{U}\}$.
\end{lemma}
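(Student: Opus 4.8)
The plan is to realize the perturbation by a single localized modification of $X$ inside one flow box along $\gamma$, using Proposition~\ref{PR} and the Remark following it. If $\gamma$ is a periodic orbit the statement is the classical Franks' lemma for flows along a periodic orbit (see \cite{BGV}); so assume $\gamma$ is an embedded arc. Then $X_t(x)\neq x$ for $0<t\leq T$, and we set $t_0:=T/3$. Apply Proposition~\ref{PR} to $\mathcal{U}$ and this $t_0$ to obtain $\varepsilon>0$, a $C^1$-neighborhood $\mathcal{V}\ni X$ and a continuous $r(\cdot)$. For any $\phi\in\mathcal{N}_\varepsilon(\hat{N}_{x,r(x)})$ with $\phi(x)=x$, Proposition~\ref{PR} produces $Z\in\mathcal{U}$ that equals $X$ off the flow box of $X$ over $X_{[0,t_0]}(x)$ and whose \poincare map through that box is $f_{x,t_0}\circ\phi$ (by the Remark). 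Since $D_xf_{x,t_0}=\Psi_{t_0}|_{N_x}$, $\phi(x)=x$, and $Z=X$ along $X_{[t_0,T]}(x)$, the linear \poincare flow of $Z$ over time $T$ at $x$ is $\Psi_T|_{N_x}\circ D_x\phi$. Hence it suffices to build $\phi$ with $D_x\phi=L:=(\Psi_T|_{N_x})^{-1}\circ\mathcal{F}$.

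The quantitative core is that $L$ is close to the identity, uniformly in $x$ and $U$. Indeed $L-\mathrm{id}=(\Psi_T|_{N_x})^{-1}\circ(\mathcal{F}-\Psi_T|_{N_x})$, and since $(\Psi_T|_{N_x})^{-1}=\pi_{N_x}\circ D_{X_T(x)}X_{-T}$ has operator norm at most $e^{c_0T}$ by Gronwall's inequality (with $c_0=\sup_{p\in M}\|D_pX\|$), we get $\|L-\mathrm{id}\|\leq e^{c_0T}\eta$. A standard cutoff in the chart $\exp_x$ --- take $\phi(v)=v+\beta(v)(L-\mathrm{id})(v)$, where $\beta$ is a fixed bump function, equal to $1$ near $0$, supported in $N_x(r(x)/2)$ and scaled so that $\|v\|\,\|D_v\beta\|$ is at most a universal constant on its support --- then yields $\phi\in\mathcal{N}_\varepsilon(\hat{N}_{x,r(x)})$ with $\phi(x)=x$, $D_x\phi=L$ and $d_{C^1}(\phi,\mathrm{id})\leq C_1\|L-\mathrm{id}\|$ for a universal $C_1$. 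Thus the lemma holds with any $\eta<\eta_0:=\varepsilon/(C_1e^{c_0T})$, and $\eta_0$ depends only on $\mathcal{U}$, $X$ and $T$.

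Finally the required properties of $Y:=Z$ follow: $Y\in\mathcal{U}$; $Y=X$ off the flow box, which we may take inside $U$, so $Y=X$ outside $U$; and $\tilde{\Psi}_T|_{N_x}=\Psi_T|_{N_x}\circ L=\mathcal{F}$ by the computation above. Moreover the construction behind Proposition~\ref{PR} can be arranged, by interpolating from $\mathrm{id}$ to $\phi$ through diffeomorphisms fixing $x$, so as not to move the central orbit $X_{[0,t_0]}(x)$; and for $\delta$ small the flow box meets $X_{[-t_1,t_2]}(x)$ only in $X_{[0,t_0]}(x)$ --- a point of the box lying on $O(x)$ is the $X_t$-image ($0\leq t\leq t_0$) of a point of $O(x)$ on the $\delta$-disk $\exp_x(N_x(\delta))\subset U$, which, by transversality of $O(x)$ to this disk and the embeddedness of $X_{[-t_1,t_2]}(x)$, must be $x$ itself. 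Hence $Y=X$ along all of $X_{[-t_1,t_2]}(x)$. The only genuinely delicate point is the uniformity of $\eta_0$: it rests on the uniform bound $\|(\Psi_T|_{N_x})^{-1}\|\leq e^{c_0T}$ (available because $T$ is fixed) and on fixing $t_0$ in advance; the periodic case, where $\gamma$ winds around a closed orbit and a single flow box does not give a uniform $\eta$, is why it was set aside at the start and handled by the classical argument.
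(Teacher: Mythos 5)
Your argument is correct and is exactly the route the paper intends: the paper gives no written proof of this lemma, merely noting that it follows from Proposition~\ref{PR} (Pugh--Robinson) and citing \cite{BGV} for another proof, and your realization of $\mathcal{F}$ as $\Psi_T|_{N_x}\circ D_x\phi$ via a bump-truncated linear perturbation $\phi$ with $D_x\phi=(\Psi_T|_{N_x})^{-1}\circ\mathcal{F}$, together with the uniform Gronwall bound on $\|(\Psi_T|_{N_x})^{-1}\|$, supplies precisely the intended details. The delicate points you isolate --- uniformity of $\eta$ in $x$ and $U$, confining the support of the perturbation to $U$ and away from the rest of $X_{[-t_1,t_2]}(x)$, fixing the central orbit pointwise so that $Y=X$ along it, and setting aside short-period closed orbits for the classical argument of \cite{BGV} --- are the right ones and are handled correctly.
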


\hspace{2mm}
We introduce the notions of dominated splitting and hyperbolic splitting for linear Poincar\'e flows as follows.

\begin{definition}
Let $\Lambda$ be an invariant set of $X_t$ which
contains no singularity. We call a $\Psi_t$-invariant splitting
$\mathcal{N}_\Lambda=\Delta^s\oplus \Delta^u$ as an \it{$l$-dominated splitting} (or $\Lambda$ admits an \it{$l$-dominated splitting}) if
$$\left\|\Psi_t|_{\Delta^s(x)}\right\|\cdot\left\|\Psi_{-t}|_{\Delta^u(X_t(x))}\right\|\leq \frac{1}{2}$$
for any $x\in\Lambda$ and any $t\geq l$, where $l>0$ is a constant. Moreover, if $\dim(\Delta^s_x)$ is constant for all $x\in\Lambda$, then we say that the splitting is a homogeneous dominated splitting.
Furthermore, a $\Psi_t$-invariant splitting $\mathcal{N}_\Lambda=\Delta^s\oplus \Delta^u$ is said to be
a {\it hyperbolic splitting} if there exist $C>0$ and $\lambda\in(0,1)$ such
that
$$\left\|\Psi_t|_{\Delta^s(x)}\right\|\leq C\lambda^t\quad\mbox{and}\quad \left\|\Psi_{-t}|_{\Delta^u(x)}\right\|\leq C\lambda^t$$
for any $x\in\Lambda$ and $t>0$.
\end{definition}

\hspace{2mm}
The following proposition which is crucial to prove the hyperbolicity of invariant sets was proved by Doering and Liao \cite{D, L}. For a detailed proof, see Proposition  1.1 in \cite{D}.

\begin{proposition}\label{doering-liao}
Let $\Lambda\subset M$ be a compact invariant set of $X_t$ such that $\Lambda \cap Sing(X)=\emptyset$. Then
$\Lambda$ is hyperbolic for $X_t$ if and only if the linear
Poincar\'e flow $\Psi_t$ restricted on $\Lambda$ has a hyperbolic splitting
$\mathcal{N}_\Lambda=\Delta^s\oplus \Delta^u$.
\end{proposition}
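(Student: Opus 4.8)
The plan is to prove the two implications separately: the forward one is soft and the reverse one requires a graph‑transform argument along the flow direction. Throughout, since $\Lambda$ is compact and $\Lambda\cap Sing(X)=\emptyset$, there are constants $0<m\le M$ with $m\le\|X(y)\|\le M$ for all $y\in\Lambda$; in particular $\langle X\rangle$ is a genuine one‑dimensional subbundle and $T_xM=\langle X(x)\rangle\oplus N_x$ is an orthogonal splitting. Assume first that $\Lambda$ is hyperbolic, with splitting $T_\Lambda M=E^s\oplus\langle X\rangle\oplus E^u$. Put $\Delta^s(x)=\pi_{N_x}(E^s(x))$ and $\Delta^u(x)=\pi_{N_x}(E^u(x))$. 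Since $E^s(x)\oplus E^u(x)$ is a complement of $\langle X(x)\rangle$ in $T_xM$ and $\ker\pi_{N_x}=\langle X(x)\rangle$, the projection $\pi_{N_x}$ carries $E^s(x)\oplus E^u(x)$ isomorphically onto $N_x$; hence $\mathcal{N}_x=\Delta^s(x)\oplus\Delta^u(x)$, and the inverse isomorphisms $\Delta^s(x)\to E^s(x)$, $\Delta^u(x)\to E^u(x)$ have norm bounded uniformly in $x$ by compactness and continuity of the bundles. Using $DX_t(X(x))=X(X_t(x))$ together with $\Psi_t=\pi_{N}\circ DX_t$, one checks that $\Psi_t(\Delta^s(x))=\Delta^s(X_t(x))$ and $\Psi_t(\Delta^u(x))=\Delta^u(X_t(x))$; and since $\pi_N$ is norm‑nonincreasing, the estimates for $E^s,E^u$ pass to $\Delta^s,\Delta^u$ up to the uniform constants above, yielding $\|\Psi_t|_{\Delta^s(x)}\|\le C'e^{-\lambda t}$ and $\|\Psi_{-t}|_{\Delta^u(x)}\|\le C'e^{-\lambda t}$. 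So $\mathcal{N}_\Lambda=\Delta^s\oplus\Delta^u$ is a hyperbolic splitting for $\Psi_t$.

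For the converse, suppose $\mathcal{N}_\Lambda=\Delta^s\oplus\Delta^u$ is hyperbolic for $\Psi_t$. In the $DX_t$‑invariant decomposition $T_xM=\langle X(x)\rangle\oplus N_x$, the tangent cocycle is block upper triangular with diagonal blocks $\alpha_t$ (on $\langle X\rangle$) and $\Psi_t$ (on $N$) and off‑diagonal block $\beta_t:N_x\to\langle X(X_t(x))\rangle$; here $\alpha_t$ is multiplication by $\|X(X_t(x))\|/\|X(x)\|$, so $\|\alpha_t^{\pm1}\|\le M/m$ for all $x\in\Lambda$ and $t\in\R$. Consequently $F^s(x):=\langle X(x)\rangle\oplus\Delta^s(x)$ and $F^u(x):=\langle X(x)\rangle\oplus\Delta^u(x)$ are $DX_t$‑invariant. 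I would then seek $DX_t$‑invariant complements of $\langle X\rangle$ inside these, of the form $E^s(x)=\{w+\sigma^s_x(w):w\in\Delta^s(x)\}$ and $E^u(x)=\{w+\sigma^u_x(w):w\in\Delta^u(x)\}$ with linear maps $\sigma^\ast_x:\Delta^\ast(x)\to\langle X(x)\rangle$. Invariance is equivalent to the cohomological relation $\sigma^\ast_{X_t(x)}\circ\Psi_t=\alpha_t\circ\sigma^\ast_x+\beta_t|_{\Delta^\ast(x)}$ for $\ast\in\{s,u\}$. Solving this for $\sigma^s$ expresses $\sigma^s_x$ by an affine operator in $\sigma^s_{X_t(x)}$ whose linear part has norm at most $\|\alpha_t^{-1}\|\cdot\|\Psi_t|_{\Delta^s(x)}\|\le C''e^{-\lambda t}\to0$; symmetrically, solving for $\sigma^u$ uses $\|\alpha_t\|\cdot\|\Psi_{-t}|_{\Delta^u}\|\to0$. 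In each case a standard contraction/summation argument produces a unique uniformly bounded solution, continuous because all the data are.

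It remains to check that $E^s,E^u$ so obtained give a hyperbolic splitting of $T_\Lambda M$. First, $F^s\cap F^u=\langle X\rangle$ (because $\Delta^s\cap\Delta^u=\{0\}$ and $\Delta^s\oplus\Delta^u$ meets $\langle X\rangle$ trivially), while $E^s\cap\langle X\rangle=E^u\cap\langle X\rangle=\{0\}$ by construction and $\dim E^s+\dim E^u=\dim\mathcal{N}_x=n-1$; a short linear‑algebra argument then gives $T_\Lambda M=E^s\oplus\langle X\rangle\oplus E^u$. Second, for each $\ast$ the graph map $w\mapsto w+\sigma^\ast_x(w)$ is a linear isomorphism $\Delta^\ast(x)\to E^\ast(x)$ with uniformly bounded norm and inverse that conjugates $\Psi_t|_{\Delta^\ast(x)}$ to $DX_t|_{E^\ast(x)}$, whence $\|DX_t|_{E^s(x)}\|\le C'''e^{-\lambda t}$ and $\|DX_{-t}|_{E^u(x)}\|\le C'''e^{-\lambda t}$; continuity of $E^s,E^u$ follows from that of $\sigma^s,\sigma^u$. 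By the definition of hyperbolicity, $\Lambda$ is hyperbolic for $X_t$.

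\emph{The main obstacle} is entirely in the reverse implication, and inside it in solving the cohomological equations for $\sigma^s$ and $\sigma^u$: the whole argument works only because on a compact invariant set free of singularities the flow direction is expanded or contracted by at most the bounded factor $\|X(X_t(x))\|/\|X(x)\|$, which supplies exactly the domination gap needed above the fast‑contracting bundle $\Delta^s$ and below the fast‑expanding bundle $\Delta^u$. The care lies in controlling the off‑diagonal term $\beta_t$ and in keeping all constants uniform in both $x\in\Lambda$ and $t\in\R$ when passing between $\Psi_t$‑estimates and $DX_t$‑estimates.
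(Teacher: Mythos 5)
Your proof is correct in substance, but note that the paper itself offers no proof of this proposition: it is quoted from Doering and Liao, and the reader is referred to Proposition 1.1 of \cite{D} for the details. What you have written is essentially the classical argument behind that citation: the forward implication by projecting $E^s\oplus E^u$ along $\langle X\rangle$ onto the normal bundle (the projection is an isomorphism there, with uniformly bounded inverse by compactness), and the reverse implication by writing $DX_t$ in block upper triangular form with respect to $T_xM=\langle X(x)\rangle\oplus N_x$, with diagonal blocks $\alpha_t$ and $\Psi_t$ and off-diagonal block $\beta_t$, and then recovering $E^s$ and $E^u$ as graphs of sections $\sigma^{s}_x:\Delta^s(x)\to\langle X(x)\rangle$, $\sigma^{u}_x:\Delta^u(x)\to\langle X(x)\rangle$ solving the cohomological equation; you correctly isolate the role of compactness and $\Lambda\cap Sing(X)=\emptyset$, namely the uniform two-sided bound on $\alpha_t^{\pm1}$, which is what makes the graph transform contract. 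Two points deserve one more line each to be airtight. First, a fixed point of the affine transform at one large time $t_0$ must be shown invariant for \emph{all} $t$: use the cocycle identities $\alpha_{t+s}=\alpha_s\alpha_t$, $\Psi_{t+s}=\Psi_s\circ\Psi_t$, $\beta_{t+s}=\beta_s\circ\Psi_t+\alpha_s\circ\beta_t$ to see that the transforms for different times commute, so uniqueness at time $t_0$ forces invariance for every $t$; alternatively give the explicit limit formula $\sigma^s_x=-\lim_{t\to\infty}\alpha_t^{-1}\circ\beta_t|_{\Delta^s(x)}$ (convergent by the same identities) and argue symmetrically for $\sigma^u$ in backward time. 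Second, the paper's definition of a hyperbolic splitting for $\Psi_t$ does not postulate continuity of $\Delta^s,\Delta^u$; either invoke the standard fact that uniformly hyperbolic invariant splittings are automatically continuous, or run your contraction on the space of continuous bounded sections, which it preserves. Neither point is a mathematical gap, only an expository one, so your write-up stands as a legitimate self-contained proof of the cited result.
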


\begin{proposition}
Let $\Lambda$ be a locally maximal set of $X_t$ with an isolating block $U$. Suppose that  $X$ has a  $C^1$-neighbourhood $\nx$ 
such that for any $Y \in \nx$, every periodic orbit and singularity of $Y$  in $U$ are hyperbolic. Then $X$ has a neighbourhood $\tilde{\nx}$, together with two uniform constants $\tilde{\eta}>0$ and $\tilde{T}>1$ such that for any $Y \in \tilde{\nx}$,
\begin{enumerate}[(i)]
\item whenever $x$ is a point on a periodic orbit of $Y_t$ in $U$ and $\tilde{T}\leq t < \infty$, then $$\frac{1}{t}[\text{log }m(\Psi^Y_t\mid_{E^u_x})-\text{log}\Vert \Psi^Y_t\mid_{E^s_x}\Vert]\geq 2\tilde{\eta};$$
\item whenever $P$ is a periodic orbit of $Y_t$ in $U$ with period $T, x \in P$, and whenever an integer $m \geq 1$ and a partition $0 =t_0<t_1<\dots< t_l=mT$ of $[0,mT]$ are given that satisfy $$t_k-t_{k-1}\geq \tilde{T}, ~ k=1,2,...,l,$$ then $$\frac{1}{mT}\displaystyle\sum_{k=0}^{l-1}\text{log}\Vert \Psi^Y_{t_{k+1}-t_k}\mid_{E^s_{X_{t_{k-1}}(x)}}\Vert\leq -\tilde{\eta},$$  and $$\frac{1}{mT}\displaystyle\sum_{k=0}^{l-1}\text{log }m (\Psi^Y_{t_{k+1}-t_k}\mid_{E^u_{X_{t_{k-1}}(x)}})\geq \tilde{\eta}.$$
\end{enumerate}
\label{liao1}
\end{proposition}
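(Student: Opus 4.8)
The plan is to deduce the proposition from two uniform facts about the periodic orbits contained in $U$ of vector fields $C^1$-close to $X$ --- a uniform per-period hyperbolicity rate and a uniform domination constant for their hyperbolic splittings --- both of Liao--Ma\~{n}\'{e} type, obtained by contradiction using the Franks-type perturbation lemma for linear \poincare flows proved above. Concretely I would first establish: \textbf{(A)} there are a $C^1$-neighbourhood $\tilde{\nx}\subset\nx$ of $X$ and constants $a>0$, $T_1>1$ such that $\tfrac1\pi\log\|\Psi^Y_\pi|_{E^s_x}\|\le-a$ and $\tfrac1\pi\log m(\Psi^Y_\pi|_{E^u_x})\ge a$ whenever $Y\in\tilde{\nx}$ and $x$ lies on a periodic orbit of period $\pi\ge T_1$ in $U$ of $Y_t$ --- the Ma\~{n}\'{e}-type conclusion drawn from the robust hyperbolicity, with the short orbits ($\pi<T_1$) treated by the same argument, using that periods of periodic orbits in $U$ are bounded below because nearby fields have only hyperbolic (hence isolated) singularities in $U$; and \textbf{(B)} after shrinking $\tilde{\nx}$, there is an integer $l\ge1$ such that for every $Y\in\tilde{\nx}$ the hyperbolic splitting $N_P=E^s\oplus E^u$ of every periodic orbit $P\subset U$ of $Y_t$ is $l$-dominated for $\Psi^Y_t$.

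Both (A) and (B) are proved by contradiction. If (A) fails one obtains $Y_n\to X$ and periodic points $x_n\in P_n\subset U$ of $Y_n$ whose per-period hyperbolicity rates tend to $0$; applying the Franks-type lemma along one full turn of $P_n$ one perturbs $\Psi^{Y_n}_{\pi_n}|_{N_{x_n}}$ so that the \poincare return map acquires an eigenvalue of modulus $1$, and since that lemma keeps the perturbed field equal to $Y_n$ outside a thin tube around $P_n$ (which may be taken inside $U$) and along $P_n$ itself, the resulting $Z_n\to X$ still has $P_n\subset U$ as a periodic orbit, now non-hyperbolic, against the hypothesis. If (B) fails, along periodic orbits in $U$ whose domination constants blow up one uses the Franks-type lemma to convert the failure of domination into a pair of normal eigenvalues of equal modulus straddling $E^s\oplus E^u$ and then pushes them onto the unit circle, again producing a non-hyperbolic periodic orbit in $U$ for a $C^1$-nearby field. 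In both arguments one passes to subsequences so that $\dim E^s$ is constant, and the hyperbolicity of the singularities in $U$ of nearby fields is what keeps $\Psi^{Y_n}_t$ controllable along the long portions of $P_n$ that pass close to $Sing(X)$.

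Granting (A) and (B), part (i) follows at once: with $l$ from (B), for $x$ on a periodic orbit of $Y\in\tilde{\nx}$ in $U$ and $k\ge1$, each base point $Y_{jl}(x)$ again lies on that orbit, so iterating the domination inequality around the loop gives
$$\frac{m(\Psi^Y_{kl}|_{E^u_x})}{\|\Psi^Y_{kl}|_{E^s_x}\|}\ \ge\ \prod_{j=0}^{k-1}\frac{m(\Psi^Y_l|_{E^u_{Y_{jl}(x)}})}{\|\Psi^Y_l|_{E^s_{Y_{jl}(x)}}\|}\ \ge\ 2^{\,k};$$
interpolating over the remainder $t-kl<l$ and absorbing the bounded factor $e^{\pm lc}$, $c=\sup_{Y\in\overline{\tilde{\nx}}}\|DY\|_\infty$, one obtains $\tfrac1t\big[\log m(\Psi^Y_t|_{E^u_x})-\log\|\Psi^Y_t|_{E^s_x}\|\big]\ge\tfrac{\log 2}{2l}$ for all $t$ beyond a threshold depending only on $l$ and $c$, so $\tilde\eta:=\tfrac{\log 2}{4l}$ works for (i). Part (ii) is harder: writing each $[t_{k-1},t_k]$ as a whole number of periods of $P$ plus a remainder shorter than the period, the whole-period blocks contract at rate $\le-a$ by (A), and when the period is shorter than $\tilde T$ every block is long and the estimate follows directly from (A); the delicate case is a long period with blocks shorter than it, where $\Psi^Y_t|_{E^s}$ may transiently expand and one must bound both the remainders and the growth coming from composing blocks out of phase. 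This is exactly the content of Liao's inequality, and I would follow the argument of \cite{L,D}, using (B) to control the out-of-phase compositions; the bound for $E^u$ is the same argument run for $\Psi^Y_{-t}$. Finally one lets $\tilde{\nx}$, $\tilde\eta$, $\tilde T$ be the smallest/largest of all constants produced above. The step I expect to be the main obstacle is the perturbation in (A) and (B) --- manufacturing a non-hyperbolic periodic orbit inside $U$ by a single, uniformly $C^1$-small perturbation that leaves the orbit intact, in particular where it passes near the (hyperbolic) singularities --- together with the Liao-type estimate underpinning (ii).
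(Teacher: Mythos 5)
The paper does not actually prove this proposition: its ``proof'' is the single line ``See Theorem 2.6 in \cite{LGW}'', so you are reconstructing the proof of the cited result rather than paralleling an argument in the text. Your outline --- a uniform per-period hyperbolicity rate (A) and a uniform domination constant (B), both obtained by contradiction via the Franks-type lemma, then (i) from (B) by telescoping the domination inequality and (ii) from (A) via Liao's estimate --- is the standard Liao--Ma\~n\'e route, and it is essentially how the cited theorem is proved. The derivation of (i) from (B) is correct as written.

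Two steps are genuinely gapped. First, in (A) you propose ``applying the Franks-type lemma along one full turn of $P_n$'' to push an eigenvalue of the return map onto the unit circle. A single composition of $\Psi^{Y_n}_{\pi_n}|_{N_{x_n}}$ with a linear map $\eta$-close to the identity changes eigenvalue moduli only by a factor in $[(1+\eta)^{-1},\,1+\eta]$, whereas the weakest stable eigenvalue has modulus of order $e^{-a_n\pi_n}$ with $a_n\to 0$ but, in the case that matters, $\pi_n\to\infty$ and $a_n\pi_n$ not small; moreover Lemma 2.3 supplies $\eta=\eta(T)$ only for arcs of a \emph{fixed} time length $T$, so it cannot be invoked on a whole turn of unbounded period with a uniform constant. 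The correct argument cuts $P_n$ into roughly $\pi_n/T_0$ arcs of length $T_0$, composes each $\Psi^{Y_n}_{T_0}$ with a dilation of norm $e^{\eta T_0}$ realized in disjoint flow boxes inside $U$, and uses the accumulated factor $e^{\eta\pi_n}$ to defeat $e^{-a_n\pi_n}$ once $a_n<\eta$; the same repair is needed in (B). Second, part (ii) does not follow formally from (A) and (B): converting ``contraction at the period'' into contraction along every $\tilde T$-spaced partition of $[0,mT]$ is exactly Liao's selection estimate, which is the technical heart of the cited theorem, and you have deferred it entirely to \cite{L} and \cite{D} rather than proved it. So the strategy is the right one, but as it stands neither pillar of the argument is fully established; given that the paper itself disposes of the statement by citation, the cleanest course is either to cite \cite{LGW} as the paper does or to carry out the distributed-perturbation and Liao-selection arguments in full.
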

\begin{proof}
See Theorem 2.6 in \cite{LGW}.
\end{proof}

\hspace{2mm}
Let $\Lambda \subset M_X$ be a closed invariant set of $X_t$ that has a continuous $\Psi_t$-invariant splitting $\mathcal{N}_{\Lambda}=\Delta^ s\oplus \Delta^u$ with dim $\Delta^s=p$, $1 \leq p\leq dim M-2$. For two real numbers $T>0$ and $\eta>0$, an orbit arc $(x,t)=X_{[0,t]}(x)$ will be called $(\eta,T,p)$-{\it quasi hyperbolic orbit arc} of $X_t$ with respect to the splitting $\Delta^s \oplus \Delta^u$ if $[0,t]$ has a partition $$0=T_0 <T_1<...<T_l=t$$ such that $T\leq T_i-T_{i-1}<2T$, $i=1,2,...,l,$ and the following three conditions are satisfied:
$$\frac{1}{T_k}\displaystyle\sum_{j=1}^k \text{log}\parallel\Psi_{T_j-T_{j-1}}\mid_{\Delta^s(X_{T_{j-1}})(x)} \parallel\leq-\eta, $$
$$\frac{1}{T_l-T_{k-1}}\displaystyle\sum_{j=k}^l \text{log\,}m(\Psi_{T_j-T_{j-1}}\mid_{\Delta^u(X_{T_{j-1}})(x)})\geq \eta ,$$
 $$\text{log}\parallel\Psi_{T_k-T_{k-1}}\mid_{\Delta^s(X_{T_{k-1}})(x)} \parallel- \text{log\,}m(\Psi_{T_k-T_{k-1}}\mid_{\Delta^u(X_{T_{k-1}})(x)})\leq -2\eta,$$ for $k= 1,2,...,l$.

\hspace{2mm}
Liao \cite{L} proved the following shadowing result which says that any quasi hyperbolic orbit arc with close enough end points  can be shadowed by a hyperbolic periodic orbit.

\begin{proposition}
Let $\Lambda$ be a compact invariant set of $X_t$ without singularities. Assume that there exists a continuous invariant splitting $\mathcal{N}_{\Lambda}=\Delta^s\oplus\Delta^u$ with dim $\Delta^s=p$, $1\leq p\leq \text{dim\,}M-2$. Then for any $\eta>0$, $T>0$, and $\varepsilon>0$, there exists $\delta>0$ such that if $(x,\tau)$ is an $(\eta,T,p)$-quasi hyperbolic orbit arc of $X_t$ with respect to the splitting $\Delta^s \oplus \Delta^u$ and $d(X_{\tau}(x),x)<\delta$ then there exists a hyperbolic periodic point $y \in M$ and an orientation preserving homeomorphism $g:[0,\tau] \rightarrow \R$ with $g(0)=0$ such that $d(X_{g(t)}(y),X_t(x))<\varepsilon$ for any $t \in [0,\tau]$ and $X_{g(\tau)(y)}=y.$
\label{quasi}
\end{proposition}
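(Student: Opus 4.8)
The plan is to reduce the closing problem to a fixed-point problem for a \emph{quasi-hyperbolic string} of linear \poincare maps and then to produce the hyperbolic periodic orbit by a graph-transform argument carried out in a suitable adapted metric; the dimension hypothesis $1\le p\le\dim M-2$ guarantees that both $\Delta^s$ and $\Delta^u$ are non-trivial along the arc $X_{[0,\tau]}(x)$, which we regard as lying in $\Lambda$ so that the splitting is defined along it. First I would pass to cross-sections. Put $x_k=X_{T_k}(x)$ for $k=0,1,\dots,l$, so $x_0=x$ and $d(x_l,x_0)<\delta$, and for a small uniform radius $r>0$ consider the \poincare maps $f_k:=f_{x_k,\,T_{k+1}-T_k}\colon\hat{N}_{x_k,r}\to\hat{N}_{x_{k+1},1}$, whose derivatives at the base points are the linear \poincare maps $A_k:=D_{x_k}f_k=\Psi_{T_{k+1}-T_k}|_{N_{x_k}}$, with $A_k(\Delta^s(x_k))=\Delta^s(x_{k+1})$ and $A_k(\Delta^u(x_k))=\Delta^u(x_{k+1})$. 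The three inequalities in the definition of an $(\eta,T,p)$-quasi hyperbolic orbit arc say precisely that the finite sequence $(A_k)_{k=0}^{l-1}$ is a quasi-hyperbolic string in Liao's sense: the forward products along $\Delta^s$ contract with exponential rate $\eta$, the backward products along $\Delta^u$ expand with rate $\eta$, and at each single step one has the domination $\|A_k|_{\Delta^s(x_k)}\|\le e^{-2\eta}\,m(A_k|_{\Delta^u(x_k)})$.

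Next I would close up and linearize. Identifying the two end discs $\hat{N}_{x_l,\cdot}$ and $\hat{N}_{x_0,\cdot}$ by the short geodesic joining $x_l$ to $x_0$ and passing to the charts $\exp_{x_k}$, the composition $F:=f_{l-1}\circ\cdots\circ f_0$ becomes a map from a fixed-radius disc about $0$ in $N_{x_0}$ into $N_{x_0}$ of the form $v\mapsto Av+c+R(v)$, where $A=A_{l-1}\cdots A_0$ (post-identification, so roughly respecting $N_{x_0}=\Delta^s(x_0)\oplus\Delta^u(x_0)$), the constant term $c$ has size $O(\delta)$, and the $C^1$-size of the nonlinear remainder $R$ on the disc is controlled \emph{uniformly in $l$} --- this uniformity comes from the local uniform continuity of $y\mapsto Df_{y,t}$ for $T\le t<2T$ together with the quasi-hyperbolic estimates, which keep all intermediate images of the disc inside discs of bounded radius. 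Using the two averaged inequalities and the one-step domination, one then builds step by step along the string a Liao-type adapted inner product on each $N_{x_k}$ in which $A_k$ is a genuine uniform contraction on $\Delta^s(x_k)$, a genuine uniform expansion on $\Delta^u(x_k)$, and the splitting is uniformly dominated; telescoping these yields an adapted norm on $N_{x_0}$ in which $A$ is honestly hyperbolic with a definite spectral gap. In that norm the standard Lipschitz graph transform associated with $F$ has a unique fixed point $v^*$ in the disc, provided $\delta$ is small relative to the gap and the bound on $R$; then $y:=\exp_{x_0}(v^*)$ is a periodic point of the \poincare return map, hence lies on a closed orbit of $X_t$.

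Finally I would verify the three conclusions. Each $f_k$ sends the point $y_k$, which is $O(\delta)$-close to $x_k$, to a point $O(\delta)$-close to $x_{k+1}$ in a time $\tau_k$ close to $T_{k+1}-T_k$ (by continuity of the flow-box time function), so concatenating the flow-box charts along the arc produces an orientation-preserving homeomorphism $g\colon[0,\tau]\to\R$ with $g(0)=0$ and $d(X_{g(t)}(y),X_t(x))<\varepsilon$ once $\delta$ and $r$ are chosen small enough, while $X_{g(\tau)}(y)=y$ by periodicity. Hyperbolicity of this closed orbit follows since its linear \poincare flow over one period is conjugate to the hyperbolic linear map $A$; equivalently, one may invoke Proposition~\ref{doering-liao} for this single closed orbit. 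Taking $\delta$ to be the minimum of the finitely many smallness thresholds appearing above --- all depending only on $\eta$, $T$, $\varepsilon$, $M$ and $X$, and \emph{not} on $\tau$ or $l$ --- finishes the argument. I expect the main obstacle to be exactly this last point: turning the two \emph{averaged} (rather than termwise) contraction and expansion estimates, together with the termwise domination, into an adapted metric in which the return map $A$ is uniformly hyperbolic, while keeping the admissible radius of the discs on which the graph transform contracts bounded below independently of the number $l$ of pieces of the arc --- this is the delicate technical heart of Liao's argument and is what allows $\delta$ to be chosen independently of $\tau$.
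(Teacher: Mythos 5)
The paper itself does not prove this proposition: it is quoted from Liao's existence theorem for periodic orbits (\cite{L}; the sentence preceding the statement attributes it there), so there is no internal argument to compare yours against line by line. Your sketch follows the broadly correct strategy (cross-sections, a quasi-hyperbolic string of linear \poincare maps, a fixed-point argument) and you correctly locate the difficulty, but as written the proof has a genuine gap exactly at the point you flag, and the route you propose for closing it would not work without substantial modification.

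Two concrete problems. First, the contraction and expansion hypotheses are \emph{one-sided averaged} conditions anchored at the two endpoints of the arc: the stable condition controls $\sum_{j=1}^{k}\log\Vert\Psi_{T_j-T_{j-1}}|_{\Delta^s}\Vert$ only for initial segments, and the unstable condition only for terminal segments. They give no bound on $\Vert\Psi_{T_k-T_i}|_{\Delta^s(X_{T_i}(x))}\Vert$ for intermediate $i<k$, so the string need not be termwise uniformly hyperbolic; consequently your ``step by step'' adapted inner products cannot be built with distortion (relative to the Riemannian metric) bounded independently of $l$, and without such a uniform distortion bound the admissible radius for the graph transform degenerates as $l\to\infty$. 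Second, even granting a hyperbolic adapted norm for the full product $A=A_{l-1}\cdots A_0$, composing the $l$ nonlinear \poincare maps into a single return map $F(v)=Av+c+R(v)$ does not yield a $C^1$ bound on $R$ over a disc of radius independent of $l$: the unstable directions expand the disc at intermediate times, and the one-sided averaged estimates do not keep all intermediate images inside discs of uniformly bounded radius, so your claimed ``uniformity in $l$'' of the remainder is unjustified. The standard proofs avoid both issues by never composing: one solves the closing equation simultaneously on the product of all normal discs $\prod_k N_{x_k}(r)$ with periodic identification, showing that the linear operator $(v_k)\mapsto(v_{k+1}-A_kv_k)$ is invertible with norm depending only on $\eta$, $T$ and $p$; extracting that bound from the one-sided averaged conditions together with the termwise domination is the real content of Liao's argument, which your sketch leaves unproven.
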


\section{From robust shadowing to dominated splitting}

\hspace{2mm}
In this section, we prove that if a nontrivial chain transitive subset $\Lambda$ of $X_t$  is robustly shadowable, then it admits a dominated splitting. For this,
we first show that any continuition $\Lambda_Y$ of $\Lambda$ does not contain both a non-hyperbolic sigularity and a non-hyperbolic periodic orbit. Next we show that  $\Lambda$ does not contain a singularity. Finally we prove that $\Lambda$ admits a dominated splitting,

\begin{lemma}
Let $\Lambda$ be a chain transitive set of $X_t$. If $\Lambda$ is robustly shadowable, then it is transitive.

\end{lemma}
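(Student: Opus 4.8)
The plan is to deduce transitivity of $\Lambda$ from topological transitivity of the restricted flow $X_t|_{\Lambda}$, and then to extract a point with full $\omega$-limit set by a Baire category argument on the compact metric space $\Lambda$. The first step is to observe that the hypothesis is barely needed: taking $Y=X$ in the definition of robust shadowability, and using that $U$ is an isolating block, we get $\Lambda_X=\bigcap_{t\in\R}X_t(U)=\Lambda$, so $\Lambda$ itself is shadowable. Fix $\varepsilon_0>0$ with $\overline{N_{\varepsilon_0}(\Lambda)}\subset U$. Then for any $\varepsilon\le\varepsilon_0$ with shadowing constant $\delta$, a bi-infinite $\delta$-pseudo orbit contained in $\Lambda$ is $\varepsilon$-shadowed by some $y$ via some $h\in Rep$; since $h$ is a homeomorphism of $\R$ and the pseudo orbit lies in $\Lambda$, for every $s\in\R$ one has $d(X_s(y),\Lambda)<\varepsilon$, so $O(y)\subset N_\varepsilon(\Lambda)\subset U$, whence $O(y)=X_t(O(y))\subset\bigcap_t X_t(U)=\Lambda$, i.e. $y\in\Lambda$. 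Thus $X_t|_\Lambda$ has the bi-infinite shadowing property; this is the only place local maximality is used.

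Next I would establish topological transitivity of $X_t|_\Lambda$ \emph{with arbitrarily large transition times}. Given nonempty open $V,W\subset\Lambda$ and $N>0$, pick $p\in V$, $q\in W$ and $r>0$ with $B_r(p)\subset V$, $B_r(q)\subset W$; set $\varepsilon=\min\{r/2,\varepsilon_0\}$ and let $\delta$ be the associated shadowing constant. By chain transitivity of $\Lambda$, choose a $\delta$-chain in $\Lambda$ from $p$ to $q$ followed by a $\delta$-chain from $q$ to $p$; the two junctions are valid, so repeating this loop periodically in both directions produces a bi-infinite $\delta$-pseudo orbit $\xi$ in $\Lambda$ in which (after reindexing) $p$ occurs at time $S_0=0$ and $q$ occurs at times $0<S_{i_1}<S_{i_2}<\cdots$, consecutive such times differing by the period of the loop. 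Shadowing $\xi$ gives $y\in\Lambda$ and $h\in Rep$ with $d(y,p)<\varepsilon<r$, hence $y\in V$; and since $h$ is an increasing homeomorphism with $h(0)=0$ while $S_{i_k}\to\infty$ (every jump time is $\ge 1$), we get $h(S_{i_k})\to\infty$ and $d(X_{h(S_{i_k})}(y),q)<\varepsilon<r$, so $X_{h(S_{i_k})}(y)\in W$. In particular some $s>N$ satisfies $X_s(V)\cap W\ne\emptyset$.

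Finally, fix a countable base $\{W_k\}$ of $\Lambda$ and set $G_{k,N}=\{x\in\Lambda:\ X_s(x)\in W_k\ \text{for some}\ s>N\}=\bigcup_{s>N}\bigl(X_{-s}(W_k)\cap\Lambda\bigr)$, which is open, and dense in $\Lambda$ by the previous paragraph. As $\Lambda$ is a compact, hence Baire, metric space, $\bigcap_{k,N\ge 1}G_{k,N}$ is residual and nonempty, and any $x$ in it has forward orbit meeting every $W_k$ at arbitrarily large times; thus $\omega(x)$ is a closed dense subset of $\Lambda$, i.e. $\omega(x)=\Lambda$, and $\Lambda$ is transitive. (The degenerate cases where $\Lambda$ is a single equilibrium or a single periodic orbit are transitive outright and are also covered by the argument.) I expect the main obstacle to be the bookkeeping of the first step — making sure that the shadowing orbit of a bi-infinite pseudo orbit genuinely returns into $\Lambda$ through the isolating block — together with confirming that the weak notion of reparametrization in the shadowing definition still forces $h(S_{i_k})\to\infty$; everything else is routine once these are in place.
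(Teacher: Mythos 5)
Your argument is correct and complete; the paper itself dismisses this lemma with ``The proof is straightforward,'' and what you have written is precisely the standard argument it is alluding to: local maximality forces every orbit shadowing a bi-infinite pseudo-orbit in $\Lambda$ back into $\Lambda$, chain transitivity supplies periodic pseudo-orbits shuttling between any two open sets, and a Baire category argument produces a point with $\omega(x)=\Lambda$. The two delicate points you flagged (the shadowing orbit landing in $\Lambda$, and $h(S_{i_k})\to\infty$) are both handled correctly, the latter because elements of $Rep$ are increasing homeomorphisms of $\R$ onto $\R$, so no gap remains.
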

\begin{proof}
The proof is straightforward.
\end{proof}

\hspace{2mm}
Using the perturbation technique developed by Pugh and Robinson \cite{PR}, Pilyugin and Tikhomirov  \cite{PT} showed that if $M$ is robustly shadowable for  $X_t$ then there is a $C^1$-neighbourhood $\nx$ of $X$ such that for any $Y \in \nx$, every critical element of $Y_t$ is hyperbolic. Here we prove that any continuition $\Lambda_Y$ of a robustly shadowable chain transitive set $\Lambda$ does not contain both a non-hyperbolic sigularity and a non-hyperbolic periodic orbit

\begin{proposition}
Let $\Lambda$ be a robustly shdaowable set of $X_t$. Then there exists a $C^1$-neighbourhood $\nx$ of $X$ such that  for any $Y \in \nx$, every  singularity and periodic orbit of $Y_t$ in $\Lambda_Y$ are hyperbolic for $Y_t$.
\label{hypesing}
\end{proposition}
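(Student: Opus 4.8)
The plan is to argue by contradiction, using the perturbation machinery of Section~2 to replace a non-hyperbolic critical element inside a continuation by a whole \emph{normally hyperbolic family} of critical elements, and then to exhibit a pseudo orbit supported on that family which admits no shadowing orbit, contradicting robust shadowability. So suppose the conclusion fails. Then there is a sequence $Y_m\to X$ in $\mathcal{X}^1(M)$ such that $(Y_m)_t$ has a non-hyperbolic critical element $c_m$ --- a singularity or a periodic orbit --- with $c_m\subseteq\Lambda_{Y_m}$. Let $\mathcal{U}_0$ be an admissible neighbourhood of $X$ with respect to $\Lambda$ and $U$ the associated isolating block, so that $\Lambda_Z=\bigcap_{t\in\R}Z_t(U)$ is shadowable for $Z_t$ whenever $Z\in\mathcal{U}_0$; note $c_m$ lies in the interior of $U$ since $c_m\subseteq\Lambda_{Y_m}\subseteq U$.

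Fix $m$. A first $C^1$-small perturbation localized near $c_m$ --- pushing all but one neutral eigenvalue of $DY_m(c_m)$, resp.\ of the linear Poincar\'e return map $\Psi^{Y_m}$ of $(Y_m)_t$ along $c_m$, off the imaginary axis, resp.\ the unit circle --- reduces us to one of four cases: the remaining neutral mode is a real eigenvalue $0$, a real eigenvalue $1$, a conjugate pair $\pm i\omega$, or a conjugate pair $e^{\pm i\theta}$ with $\theta/2\pi$ irrational (the last after a further perturbation). Working in a small ball $B_\rho$ of radius $\rho$ about $c_m$ with $B_\rho\subseteq U$, I then apply Proposition~\ref{PR} and the Franks-type lemma for linear Poincar\'e flows (and an elementary local modification when $c_m$ is a singularity) to flatten the nonlinear part of the central dynamics inside $B_\rho$; since on $B_\rho$ the nonlinear terms have $C^1$-norm of order $\rho$, this costs a $C^1$-perturbation of size of order $\rho$, so the perturbed field $Z_m$ satisfies $d_{C^1}(Z_m,X)\le d_{C^1}(Z_m,Y_m)+d_{C^1}(Y_m,X)\to 0$ as $m\to\infty$ (with $\rho=\rho_m\to 0$). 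By construction $Z_m$ carries inside $B_\rho$ a compact invariant set $\Sigma_m$ --- namely an arc of singularities; a $2$-disc filled by closed orbits about a central singularity; a cylinder of closed orbits; or a solid torus filled by invariant $2$-tori carrying irrational flows --- one for each case. Since the non-central directions remain uniformly hyperbolic, $\Sigma_m$ is a normally hyperbolic invariant submanifold; being $Z_m$-invariant and contained in $U$, it lies in $\Lambda_{Z_m}$. For $m$ large, $Z_m\in\mathcal{U}_0$, hence $\Lambda_{Z_m}$ is shadowable for $(Z_m)_t$.

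Now fix such a large $m$ and write $Z=Z_m$, $\Sigma=\Sigma_m$. By normal hyperbolicity there is a neighbourhood $V$ of $\Sigma$ such that the only full $Z$-orbit staying in $V$ for all time is contained in $\Sigma$ (that is, $W^s_{\mathrm{loc}}(\Sigma)\cap W^u_{\mathrm{loc}}(\Sigma)=\Sigma$); and, $\Sigma$ being transversally nondegenerate, we may fix $\varepsilon_0>0$ so small that its $\varepsilon_0$-neighbourhood lies in $V$ and that every member of $\Sigma$ (a singularity, a closed orbit, or a torus) has some point of $\Sigma$ more than $2\varepsilon_0$ away from it. Given small $\delta>0$, build a $\delta$-pseudo orbit $\{(x_i,t_i)\}_{i\in\Z}$ with $t_i\ge1$, contained in $\Sigma$, which sits on a single member for $i\le 0$ and for $i\ge 0$ is $\varepsilon_0$-dense in $\Sigma$ (pass within $\delta$ of each point of an $\varepsilon_0/2$-net of $\Sigma$); this is possible because the members are genuine orbit segments of $Z$ and $\Sigma$ is a connected union of them. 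If $y\in M$ and $h\in\mathit{Rep}$ gave an $\varepsilon_0$-shadowing orbit, then $Z_s(y)$ would lie in the $\varepsilon_0$-neighbourhood of $\Sigma$, hence in $V$, for all $s\in\R$, so $y\in\Sigma$ and $\overline{O(y)}$ is a single member $K$ of $\Sigma$. Choosing $p\in\Sigma$ with $d(p,K)>2\varepsilon_0$, the pseudo orbit passes within $\varepsilon_0$ of $p$ at some $t$, so $d(x_0*t,K)>\varepsilon_0$; but $Z_{h(t)}(y)\in\overline{O(y)}=K$ and $d(Z_{h(t)}(y),x_0*t)<\varepsilon_0$ give $d(x_0*t,K)<\varepsilon_0$ --- a contradiction. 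Hence $\Lambda_{Z_m}$ is not shadowable, contrary to the previous paragraph.

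The main obstacle is the construction in the second step: producing $Z_m$ out of Proposition~\ref{PR} and the Franks-type lemma so that it genuinely realizes the flattened central dynamics by an honest $C^1$-small, compactly supported perturbation, and verifying that the resulting invariant set is exactly a normally hyperbolic family of critical elements contained in $\Lambda_{Z_m}$. The complex-eigenvalue cases, where one must realize a family of invariant tori carrying irrational flows, are the most delicate; once $\Sigma_m$ is in hand, the pseudo orbit construction and the shadowing contradiction above are routine.
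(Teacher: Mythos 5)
Your proposal is correct and follows essentially the same strategy as the paper's proof: flatten the dynamics near the non-hyperbolic singularity or periodic orbit by a $C^1$-small, compactly supported perturbation (bump function, Proposition~\ref{PR}, Franks-type lemma) to produce a continuum of invariant critical elements inside $\Lambda_Z$, then build a $\delta$-pseudo orbit drifting across that continuum and show no genuine orbit can shadow it because each orbit is trapped in a single invariant member. The only notable divergence is in the complex-eigenvalue case for periodic orbits, where the paper perturbs the neutral eigenvalue to a \emph{root of unity} so that $\Sigma$ is simply a solid torus foliated by closed orbits, thereby avoiding the invariant tori with irrational flows that you correctly flag as the most delicate part of your construction.
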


\begin{proof}

Suppose $\Lambda$ is a robustly shadowable set of $X_t$. Then there exist  a $C^1$-neighborhood $\nx$ of $X$ and a neighborhood $U$ of $\Lambda$ such that for any $Y \in \nx$, the continuation $\Lambda_Y=\displaystyle\cap_{t \in \R}Y_t(U)$ is shadowable for $Y_t$.

\hspace{2mm}
\textbf{Case} $1$: Suppose there is $Y \in \nx$ such that $\Lambda_Y$ contains a non-hyperbolic singularity $\sigma$.
By using the Taylor's theorem, we may assume that in a neighbourhood of $\sigma$ the dynamical system induced by $Y$ is expressed by the following differential equation: 
$$\dot{x}=Ax+ K(x),$$ where $A \in M_{n\times n}(\R)$ and $K:\R^n \rightarrow \R^n$ is a continuous map satisfying $$\displaystyle\lim_{x \rightarrow 0}\frac{K(x)}{\parallel x \parallel^2}=0.$$ Since $\sigma$ is not hyperbolic, there is an eigenvalue $\lambda$ of $A$ with zero real part. First we assume that $\lambda=0$. By changing coordinate, if necessary, we may assume that there is a $n \times n$-matrix $D$ close enough to $A$ such that 

\begin{equation}
D=\left[\begin{array}{ll}0&0\\0&B\end{array}\right],
\label{coor0}
\end{equation}
where $B$ is a $(n-1) \times (n-1)$-matrix with real entries.
We represent the coordinates of a point $x$ in a neighbourhood of $\sigma$ by $x=(y,z)$ with respect to $D$.
Let $\varepsilon>0$, and choose a real valued $C^{\infty}$ bump function $\beta:\R \rightarrow \R$ that satisfies the following conditions:
$$
\left\{\begin{array}{l}
\beta(x) \subset [0 ,1]\hspace{1.4 cm}\text{for }x \in \R,\\
\beta(x)=0  \hspace{2 cm} \text{for }\mid x\mid \geq \varepsilon,\\
\beta(x) =1\hspace{2 cm} \text{for }\mid x\mid \leq \frac{\varepsilon}{4},\\
0\leq\beta'(x)<\frac{2}{\varepsilon} \hspace{1.1 cm}\text{for } x \in \R.
\end{array}\right.
$$
Define $\rho:\R^n \rightarrow \R$ by $\rho(x)=\beta(\parallel x
\parallel)$. By taking $\varepsilon$ small enough, one can see that the vector field $Z$ obtained from the following differential equation $$\dot{x}=Dx+(1-\rho(x))K(x)$$ is $C^1$-close to $Y$. Moreover, we have $B_{\frac{\varepsilon}{4}}(\sigma)\subset U$. Consequently we see that  $Z \in \nx$, $\sigma\in Sing(Z)\cap\Lambda_Z$ and $\Lambda_Z$ is shadowable for $Z$. Since $\rho (x)=1$  for $\parallel x \parallel < \frac{\varepsilon}{4}$ , in the $\frac{\varepsilon}{4}$ neighbourhood of $\sigma$, the differential equation associated to $Z$ is given by $$\left\{\begin{array}{l}\dot{y}=0 \\ \dot{z}=Bz\hspace{.3cm}.\end{array}\right.$$  By considering coordinates represented in (\ref{coor0}), for any $x=(y,z) \in B_{\frac{\varepsilon}{4}}(\sigma),$ we have 
 $$Z_t(x)=Z_t(y,z)=(y,exp(Bt)z).$$
This implies that if $\mid y \mid \leq \frac{\varepsilon}{4}$ then $(y,0) \in Sing(Z) \cap U$, and so $\{(y,0):\,\mid y\mid < \frac{\varepsilon}{4}\} \subset \Lambda_Z.$ 
Let $\delta>0$ be a corresponding constant from the definition of shadowing of $\Lambda_Y$ for $\frac{\varepsilon}{8}$.
Choose $\alpha_0=0<\alpha_1<...<\alpha_n = \frac{\varepsilon}{2}$ such that \ $\mid \alpha_i - \alpha_{i-1} \mid <\delta$ for $i=1,...n$. Let 
$$x_i=(y_i,z_i) ~{\text{and}}~ t_i=1 ~ {\text{for}}~ i=1,...,n.$$
Clearly $\{(x_i,t_i)\mid i=0,\ldots,n\}$ is a finite $\delta$-pseudo orbit of $Z_t$ in $\Lambda_Z$. Since $x_0$ and $x_n$ are singularities we can put 
$$x_i=x_0, ~ t_i=1 ~ \text{for} ~ i \leq 0; ~ {\text{and}} ~ x_i=x_n, ~ t_i=1 ~ \text{for} ~ i>n.$$

Then $\{(x_i,t_i)\mid i\in \Z \}$ is a $\delta$-pseudo orbit of $Z_t$ in $\Lambda_Z$. Since $\Lambda_Z$ is shadowable, there are $(y,z) \in M $ and a reparametrization $h$ such that 
$$
d(X_{h(t)}(y,z),x_0*t)<\frac{\varepsilon}{8}
$$
 for all $t \in \R$. This implies $O(y) \subset B_{\frac{\varepsilon}{4}}(0)$.
Since the intersections of  planes formulated by $\{(y,z) \mid y=c\}$ with $B_{\frac{\varepsilon}{4}}(0)$ are invariant ($c$ is a constant), there is $c_0 \in (- \frac{\varepsilon}{4}, - \frac{\varepsilon}{4})$ such that $O(y) \subset \{(y,z) \mid y=c_0\}$. Without loss of generality, we may assume $c_0=0$. Then we get a contradiction since  $d(X_t(y,z), x_1)\geq\frac{\varepsilon}{4}$ for all $t \in \R$.

\hspace{2mm}
Suppose that $\lambda=ib$ for some nonzero $b\in \R$. By the same techniques as above, we can construct a vector field $Z$ which is $C^1$-close to $Y$ and in a neighbourhood of $\sigma$, the differential equation associated to $Z$ is given by
\begin{equation}
\dot{x}=Ax=\left[\begin{array}{ll}C &0\\0&B\end{array}\right]\left[\begin{array}{l}y\\z\end{array}\right],
\label{coor1}
\end{equation}
where $C=\left[\begin{array}{ll}\text{cos }(b) &\text{sin }(b)\\\text{-sin }(b)&\text{cos }(b)\end{array}\right]$.  By  considering the coordinates obtained from (\ref{coor1}) in the $\frac{\varepsilon}{4}$ neighbourhood of $\sigma$, we can see that  every point $x=(y_1,y_2,0)$ is periodic. Since the intersections of cylinders formulated by $\{(y_1,y_2,z)\mid y_1^2+y_2^2=c, c \in \R \}$  and $B_{\frac{\varepsilon}{4}}(\sigma)$ are invariant, we can derive a contradiction by using the same techniques as above.

\hspace{2mm}
   \textbf{Case} $2$: Suppose there is $Y \in \nx$ such that $\Lambda_Y$ contains a non-hyperbolic periodic orbit $\gamma$. Let $p \in \gamma$, and denote the period of $\gamma$ by $\pi(p)$. 
Then the linear \poincare map $\Psi_{\pi(p)}:N_p \rightarrow N_{p}$ has an eigenvalue of modulus $1$. Hence we can find a linear map $P:N_p\rightarrow N_p $ arbitrarily close to $\Psi_{\pi(p)}$ that has an eigenvalue $\lambda$ of modulus $1$, the multiplicity of $\lambda$ is $1$, and $\lambda$ is a root of unity (i.e., $\lambda^n=1$ for some $n \in \N$). 
Using Lemma 2.3, we may assume that $\Psi_{\pi(p)}=P$.
By changing the coordinates in $N_P$, if necessary, we may assume that
\begin{equation}
\Psi_{\pi(p)}=\left[\begin{array}{ll}C &0\\0&B\end{array}\right]
\label{coorpoin3}
\end{equation}
and  $ C\,w =\lambda w$  for some $(w,0) \in N_p$, where $C$ is a $1 \times 1$ (or $2 \times 2$)-matrix. Choose $r>0$ such that $\hat{N}_r \subset U$ and the \poincare map $f_{p,\pi(p)}: \hat{N}_{x,r}\rightarrow \hat{N}_{p,1}$ is well defined. Since $f_{p,\pi(p)}$ is a $C^1$ map, using  the same techniques as in Case  $1$, we can find a map $$g_{p,\pi(p)}: \hat{N}_{x,r}\rightarrow \hat{N}_{p,1}$$ 
which is arbitrarily $C^1$-close to $f_{p,\pi(p)}$ and  $exp_p^{-1}\circ g \circ exp_p\mid_{N_{x,\frac{r}{2}}}=\Psi_{\pi(p)}\mid_{N_{x,\frac{r}{2}}}.$ 
By Proposition 2.1, we may assume that $f_{p,\pi(p)}=g$.

\hspace{2mm}
By the tubular flow theorem for closed orbits in Section 2.5.2 in \cite{AP}, we can find constants $s,\delta_0, l>0$ such that if $x \in \hat{N}_p\cap B_{s}(p)$,  $y \in M$ and $\varepsilon \in (0,\delta_0)$ then $d(x,y)<\varepsilon$ implies $y=Y_{t'}(y'),$ for some $y' \in \hat{N}_p$ and $\mid t' \mid , d(y',x)<l\varepsilon.$
Let $\delta>0$ be a corresponding constant for $\varepsilon<\text{min}\{\delta_0 ,\frac{s}{4l}\}$ obtained from the shadowing property of $\Lambda_Y$. Let $v$ be a scalar multiplication of $w$ which obtained in equation (\ref{coorpoin3}) satisfying $\parallel v \parallel=s$.
To make a $\delta$-pseudo orbit, fix $N>0$ and define 
$$
x_i=\left\{\begin{array}{l}
p\hspace{4cm} i\leq 0,\\
exp_p(\frac{i}{N}C^iv,0) \hspace{1.6 cm} 0\leq i \leq N-1,\\
exp_p(C^Nv,0) \hspace{1.8cm} i\geq N
,\end{array}\right.$$ 
and $t_i=\tau(x_i),$ where $\tau$ is the first return map. Then we get $$d(X_{t_i}(x_i),x_{i+1})= \parallel \frac{i}{N}C^{i+1}v-\frac{i+1}{N}C^{i+1}v\parallel=\parallel \frac{\lambda^{i+1}}{N}v\parallel=\parallel \frac{1}{N}v\parallel<\delta,$$ for sufficient large $N$. Since $C^nv=\lambda^nv=v$, we see that each $\{x_i\}$ is periodic and $O(x_i) \subset U$ for all $i \in \Z$. Consequently, we get $x_i \in \Lambda_Y$ for all $i \in \Z$. Since $\Lambda_Y$ satisfies the shadowing property, there are $x \in M$ and $h \in \text{Rep}$ such that 
$$
O(x) \subset B_s(\gamma)\text{ and } d(X_{h(t)}(x),x_0*t)<\varepsilon
$$
 for all $t \in \R$.  Hence there are $t_1,t_2 \in \R$ such that $$d(Y_{t_1}(x),p)<\varepsilon \text{ and } d(Y_{t_2}(x),x_N)<\varepsilon.$$  By the above fact, we can choose $t'_1$ and $t'_2$ in $\R$ such that
\begin{equation}
d(Y_{t'_1}(x),p)<l\varepsilon<\frac{s}{4}, ~  d(Y_{t'_2}(x),x_N)<l\varepsilon<\frac{s}{4}, \text{and} Y_{t'_1}(x), Y_{t'_2}(x) \in \hat{N}_{p}.
\label{moollla}
\end{equation}
Suppose that $$Y_{t'_1}(x)= \text{exp}_p(v_1,w_1)\text{ and } Y_{t'_2}(x) =\text{exp}_p(v_2,w_2).$$ Then (\ref{moollla}) implies that 
\begin{equation}\parallel (v_1,w_1)\parallel < \frac{s}{4} \text{  and  } \parallel (v_2,w_2)-(C^Nv,0)\parallel <\frac{s}{4}. \label{Aha}\end{equation}
Moreover, we see that  $(v_1,w_1)$ and $(v_2,w_2)$ belongs to the same orbit of $\Psi_{\pi(p)}$. Hence, without loss of generality, we may assume that there is $j \in \N$ such that $v_1=C^{j}v_2$. Consequently, we get 
$$\parallel v_1\parallel=\parallel C^j v_2\parallel=\parallel v_2\parallel.$$
But  (\ref{Aha}) implies that $$\parallel v_1\parallel < \frac{s}{4} \text{  and  } \parallel v_2-C^Nv\parallel <\frac{s}{4}. $$  On the other hand, we have  $\parallel C^nv\parallel=\parallel v\parallel=s$, and so the  contradiction completes the proof of our proposition.  
\end{proof}

\hspace{2mm}
Recently, Gan {\it et al.} \cite{GLT} showed that if $M$ is  robustly shadowable for $X \in \mathcal{X}^1(M)$, then there is no singularity $\sigma \in Sing(X)$ exhibiting homoclinic connection. Here the {\it homoclinic connection} is the closure of a orbit of a regular point which is contained in both the stable and the unstable manifolds of $\sigma$.

\begin{proposition}
Let $\Lambda$ be a nontrivial chain transitive set of $X_t$. If $\Lambda$ is robustly shadowable then it  does not contain a singularity of $X$.
\end{proposition}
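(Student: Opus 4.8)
The plan is to argue by contradiction, so I assume that $\Lambda$ contains a singularity $\sigma\in Sing(X)$, and I aim to contradict robust shadowability.

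First I would pin down the type of $\sigma$. Since $X$ itself lies in the admissible neighbourhood $\nx$ for $\Lambda$, Proposition~\ref{hypesing} shows that $\sigma$ is hyperbolic. Moreover $\sigma$ cannot be a sink: a hyperbolic sink of $X_t$ has a trapping neighbourhood from which, for every sufficiently small $\delta$, no $\delta$-chain can escape, so its chain component reduces to $\{\sigma\}$; as $\Lambda$ is chain transitive this would force $\Lambda=\{\sigma\}$, contradicting nontriviality. Applying the same remark to $-X$ rules out a source. Hence $\sigma$ is a saddle, i.e. $\dim W^s(\sigma)\ge1$ and $\dim W^u(\sigma)\ge1$.

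Next I would manufacture a homoclinic connection at $\sigma$ lying inside $\Lambda$. By nontriviality pick $x_1\in\Lambda$ with $x_1\ne\sigma$ and set $4a:=d(x_1,\sigma)>0$; fix $\varepsilon\in(0,a)$ so small that $B_\varepsilon(\Lambda)\subset U$ and the local stable/unstable manifold theorem applies on $B_\varepsilon(\sigma)$, and let $\delta>0$ be the shadowing constant of $\Lambda$ for $X_t$ and $\varepsilon$. Concatenating a $\delta$-chain in $\Lambda$ from $\sigma$ to $x_1$ with a $\delta$-chain starting at $x_1$ and ending at $\sigma$, and padding both ends with copies of $(\sigma,1)$, I obtain a bi-infinite $\delta$-pseudo orbit $\{(x_i,t_i)\}_{i\in\Z}$ in $\Lambda$ which is constant equal to $\sigma$ for all large $|i|$ and which contains $x_1$. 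Shadowing yields $y\in M$ and a reparametrisation $h$ with $d(X_{h(t)}(y),x_0*t)<\varepsilon$ for all $t$. Since $x_0*t=\sigma$ for all $t\le0$ and for all large $t$, both the backward and the forward orbit of $y$ remain in $B_\varepsilon(\sigma)$, hence by the manifold theorem $X_t(y)\to\sigma$ as $t\to-\infty$ and as $t\to+\infty$; and $O(y)\ne\{\sigma\}$ because the orbit of $y$ passes within $\varepsilon<a$ of $x_1$ while $d(x_1,\sigma)=4a$. Thus $O(y)$ is a regular homoclinic orbit and $\Gamma:=\overline{O(y)}=O(y)\cup\{\sigma\}$ is a homoclinic connection at $\sigma$; moreover $O(y)\subset B_\varepsilon(\Lambda)\subset U$, so $\Gamma$, being compact and $X_t$-invariant, lies in $\bigcap_{t\in\R}X_t(U)=\Lambda$.

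Finally I would turn this homoclinic connection into a contradiction with robustness, in the spirit of \cite{GLT}. Using the Pugh--Robinson perturbation (Proposition~\ref{PR}) together with the Franks-type Lemma~2.3, I would first perform a $C^1$-small perturbation supported near $\sigma$ making the leading stable and unstable eigenvalues of the saddle real and resonant (saddle quantity zero), while keeping --- with the help of a $C^1$ connecting perturbation near a regular point of $O(y)$ inside $U$ --- a homoclinic connection at the continuation $\sigma_Y$ tangent to these leading directions; a further small perturbation near a regular point of that connection, supported in $U$ and away from $\sigma_Y$, then closes the loop into a periodic orbit $\gamma_Y$ of the resulting field $Y$. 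Keeping all perturbations small we have $Y\in\nx$, and $\gamma_Y$, being $Y_t$-invariant and contained in $U$, lies in $\Lambda_Y=\bigcap_{t\in\R}Y_t(U)$. But by the classical homoclinic-loop bifurcation analysis, the orbit $\gamma_Y$ born at a resonant saddle carries a Floquet multiplier equal to $1$ (its period is large and the leading transverse exponent is of order the reciprocal of the period), so $\gamma_Y$ is non-hyperbolic, contradicting Proposition~\ref{hypesing}. This last step --- producing from $\Gamma$ a $C^1$-nearby vector field whose continuation $\Lambda_Y$ contains a non-hyperbolic critical element --- is the main obstacle; everything preceding it is soft.
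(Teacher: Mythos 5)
Your preliminary reductions and your construction of a homoclinic connection $\Gamma=\{\sigma\}\cup O(y)\subset\Lambda$ are correct and coincide with the first stage of the paper's own proof (hyperbolicity of $\sigma$ via Proposition~3.2, then a pseudo-orbit $\sigma\to x_1\to\sigma$ shadowed by a true orbit which must lie in $W^s(\sigma)\cap W^u(\sigma)$ and, being invariant and contained in $U$, in $\Lambda$). The genuine gap is the final step, which you yourself flag as the main obstacle: it fails as stated. Closing a homoclinic loop of a resonant saddle into a periodic orbit does \emph{not} produce an orbit with a Floquet multiplier equal to $1$. The multipliers of a periodic orbit $\gamma_Y$ spending a long time $T_\sigma$ near the saddle are governed by the composition of the local passage map (which involves the whole spectrum of $D_\sigma Y$, not only the leading pair) with the global transition map along the regular part of the loop. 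Making the leading stable and unstable eigenvalues resonant only forces the corresponding \emph{Lyapunov exponent} of $\gamma_Y$ to be of order $1/T_\sigma$; the multiplier itself is $e^{O(1)}$ and is generically off the unit circle, so $\gamma_Y$ is typically still hyperbolic and no contradiction with Proposition~3.2 is obtained. Hitting modulus exactly $1$ is a codimension-one condition that would require a further controlled Franks-type perturbation you have not supplied; moreover, the intermediate steps (keeping the connection while changing the eigenvalues, then closing the loop inside $U$) need connecting-lemma machinery beyond Proposition~2.1.

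For contrast, the paper avoids periodic orbits entirely at this stage. It applies shadowing a second time to produce a \emph{second} homoclinic orbit $\Gamma'\subset\Lambda$ disjoint from the first, deduces $\dim W^s(\sigma)\ge 2$, extracts a dominated splitting $E^s=E^c\oplus E^{ss}$ with $\dim E^c=1$, linearizes near $\sigma$, and then shows by a cone estimate that any orbit shadowing the pseudo-orbit ``enter $\sigma$ along one connection, leave along the other'' must exit through a cone transverse to $W^s(\sigma)$ while the shadowing estimate simultaneously forces it to remain near $W^s(\sigma)$ for all forward time --- a contradiction drawn from the shadowing property of $X$ itself, with only general-position perturbations. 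If you wish to keep your route, you would in effect be re-proving the result of \cite{GLT} that robustly shadowable fields admit no homoclinic connections to singularities, and you would need to replace the ``multiplier equal to $1$'' claim by an actual argument.
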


\begin{proof}
Let $U$ be an isolating block of $\Lambda$, and suppose $U$ contains a singularity $\sigma$. By Proposition 3.2, it must be hyperbolic.

\hspace{2mm}
First we show that there is $z \in W^s(\sigma)\cap W^u(\sigma)$ such that $$\Gamma :=\{\sigma\}\cup O(z) \subset \Lambda.$$
Choose $x \in \Lambda\setminus \{\sigma\}$, and let $\eta>0$ be a constant to ensure that the local stable manifold $W_\eta^s(\sigma)$ and the local  unstable manifold $W_\eta^u(\sigma)$ of $\sigma$ are embedded submanifolds of $M$. Take $\delta_0>0$ satisfying $\displaystyle\bigcup_{y \in \Lambda}B_{\delta_0}(y) \subset U$. Let $\delta>0$ be a corresponding constant for $\varepsilon=min\{\frac{\eta}{2}, \frac{\delta_0}{2}, \frac{d(\sigma,x)}{2}\}$ obtained from the shadowability of $\Lambda$. Since $\Lambda$ is  transitive, there are two finite $\delta$-pseudo orbits in $\Lambda$
\begin{center}$\{(x'_i,t'_i)\mid t'_i\geq1,i=1,\ldots,n\}$ and  $\{(x''_i,t''_i)\mid t''_i\geq 1,i=1,\ldots,m\}$ \end{center} such that $x'_0=x''_m=\sigma$, and $x'_n=x''_0=x$. Define an infinite $\delta$-pseudo orbit in $\Lambda$ as follows: $$(x_i,t_i)=\left\{\begin{array}{l}
(\sigma,1)\hspace{2cm} i< 0,\\
(x'_i,t'_i) \hspace{1.8cm} 0\leq i< n,\\
(x''_{i-n},t''_{i-n}) \hspace{1cm} n\leq i < n+m, \\
(\sigma,1)\hspace{2cm} i\geq n+m.
\end{array}\right.$$ 
Then there are $z \in M$ and $h \in \text{Rep}$ such that $$d(X_{h(t)}(z),x_0*t)<\varepsilon$$ for all $t \in \R$. This implies that there is $T>0$ such that 
$$d(X_{t}(z),\sigma)<\eta  ~ {\text{for all}}~ t>T ~ {\text{and}}~ t<-T.$$
By our construction, we see that $z \in W^s(\sigma)\cap W^u(\sigma).$  Hence we have $$\displaystyle\sup_{t \in \R, y \in \Lambda}(d(X_t(z),y))<\varepsilon<\frac{\delta_0}{2}.$$ This implies that $O(z) \subset U$ and $z \in \Lambda$.

\hspace{2mm}
Second we show that there is $x \in W^s(\sigma)\cap W^u(\sigma)$ such that $$\Gamma' :=\{\sigma\}\cup O(x) \subset \Lambda ~\text{and}~ x \notin O(z).$$ Let $\varepsilon>0$ be such that $\bigcup_{x\in\Gamma}B_{\varepsilon}(x) \subset U,$ and let $\delta$ be a corresponding constant for $\varepsilon$ obtained from the shadowing property of $\Lambda$. Since $z \in W^s(\sigma)\cap W^u(\sigma)$, there is $m \in \N$ such that 
$$d(X_n(z),\sigma)<\frac{\delta}{2} ~\text{and}~ d(X_{-n}(z),\sigma)<\frac{\delta}{2}$$ for all $n\geq m$. Consider a $\delta$-pseudo orbit in $\Lambda$  $$(x_i,t_i)=\left\{\begin{array}{l}
(X_i(z),1)\hspace{1.6cm}\text{for}\hspace{1cm} i\leq m ,\\
(X_{i-2m}(z),1)\hspace{1cm}\text{for}\hspace{1cm}i>m.
\end{array}\right.$$ 
Then there are $x \in M$ and $h \in \text{Rep}$ such that $d(X_{h(t)}(x),x_0*t)<\varepsilon.$ We also easily check that $$
x \in W^s(\sigma)\cap W^u(\sigma), ~ O(x) \subset U ~ \rm{and} ~ x \not\in O(z).
$$ 
This implies that $dimE^s=dimW^s(\sigma) = k \geq2$. By applying Lemma 3.5 in \cite{GLT}, we can assume that there is a dominated splitting $E^s=E^c \oplus E^{ss}$ such that $dimE^c=1$. We also perturb $\Gamma$ and $\Gamma'$ to make sure that $$(\Gamma\cup \Gamma')\cap W^{ss}(\sigma)=\{\sigma\},$$ 
where $W^{ss}(\sigma)$ be the strong stable submanifold of $M$ tangent to $E^{ss}$.
Furthermore we may perturb that in a neighbourhood $V$ of $\sigma$, the dynamic induced by $X$ is expressed by the following differential equation
\begin{equation}\left[\begin{array}{l}\dot{x}^c\\\dot{x}^{ss}\\\dot{x}^u\end{array}\right]=A\left[\begin{array}{l}x^c\\x^{ss}\\x^u\end{array}\right]=\left[\begin{array}{lll}B_1 & 0 & 0 \\ 0&B_2 & 0\\ 0&0&C\end{array}\right]\left[\begin{array}{l}x^c\\x^{ss}\\x^u\end{array}\right],
\label{coor3}
\end{equation}
where $B_1$, $B_2$ and $C$ are preserving the splitting  $E^c\oplus E^{ss} \oplus E^u$. Here the eigenvalues of  $B_2$ and $C$ have  negative and positive real part, repectively, and the spectrum of $B_1=\{\lambda_1\}$. For more details on these perturbations, see  \cite{GLT}.
Since the dynamic  on $V$ is induced by the differential equation $\dot{x}=Ax$, we can express every point $y$ in $V$ by $y=(y^c,y^{ss},y^u)$  based on the coordinates obtained from $E^c\oplus E^{ss}\oplus E^u$. Then we get  
\begin{equation}
X_{t}(y)= (X_{t}(y^c),X_t(y^{ss}), X_{t}(y^u))=(e^{B_1 t}y^c,e^{B_2t}y^{ss}, e^{Ct}y^u).
\label{coor5}
\end{equation}

\hspace{2mm}
Next we are going to get some useful properties for $\Gamma \cup \Gamma'$ that helps us to complete the proof. 
 Choose $x \in \Gamma'$ and $z_1,z_2 \in \Gamma$ satisfying
 $$x,z_1 \in W^s(\sigma), ~ z_2\in W^u(\sigma), ~O^+(x) \cup O^+(z_1)\subset V, ~\text{and}~ O^-(z_2) \subset V. $$

Fix $r>0$ and let $y \in \hat{N}_{x,r} $. Assume that there exists $t>0$ such that $$X_{t}(y) \in \hat{N}_{z_2,r} \text{ and } X_{[0,t]}\subset V.$$ For any $y \in \hat{N}_{x,r} ,$ denote by $\tau(y)$  the minimum of $t$ with the above property (if such a $t$ exists).  Define a map $P_r$ by
$$P_r : \text{Dom}(P_r) \subset \hat{N}_{x,r} \rightarrow \hat{N}_{z_2,r},  ~ ~ P_r(y)=X_{\tau(y)}(y).$$
We show that there is $r_0>0$ such that  Dom$(P_r) \neq \emptyset$ for any $r \in (0,r_0]$. 
Fix $r_0>0 $  such that  
$$\displaystyle\bigcup_{t\geq 0}\{B_{r_0}(X_t(x))\cup B_{r_0}(X_{-t}(z_2))\} \subset V.$$  
Let $r \in (0 ,r_0]$, and take $r_1 \in (0 ,r]$ such that  $d(X_t(y),x)<r_1$ for $y \in M$ and $t>0$. Then there is $t'\in [0,t]$  such that 
$$X_{t'}(y) \in \hat{N}_{x,r} ~ \text{and} ~ X_{[t',t]}(y)\subset B_r(x).$$
If $d(X_t(y),z_2)<r_1$, then  there is $t' \in [t , \infty)$ such that 
$$X_{t'}(y) \in \hat{N}_{z_2,r} ~\text{and} ~ X_{[t,t']}(y)\subset B_r(z_2).$$

Let $\delta>0$ be a corresponding constant for $\frac{r_1}{2}$ obtained from the shadowing property of $\Lambda$. Let $m \in \N$ be such that 
$$d(X_{m}(x),\sigma)<\frac{\delta}{2} ~\text{and} ~ d(X_{-m}(z_2),\sigma)<\frac{\delta}{2}.$$
Consider the following $\delta$-pseudo orbit 
\begin{equation}
(x_i,t_i)=\left\{\begin{array}{l}(x,1) \hspace{2 cm} i\leq m,\\
(X_{-m}(z_2),1) \hspace{.7 cm} i\geq m+1.
\end{array}\right.
\label{pseudo1}
\end{equation} 
Then there are $y \in M$ and $h \in Rep$ such that $$d(X_{h(t)}(y),x_0*t)<\frac{r_1}{2}.$$ 
This implies that there are  $0\leq t_1 <t_2<t_3 < \infty$ such that 
$$d(X_{[h(t_1),h(t_2))}(y),X_{[0 ,m]}(x))<\frac{r_1}{2} ~\text{and} ~d(X_{[h(t_2),h(t_3)]}(y),X_{[-m ,0]}(z_2))<\frac{r_1}{2}.
$$
Hence we have $X_{[h(t_1) , h(t_3)]}(y) \subset V $. Let $t'_1,t'_2$ be constants corresponding to  $h(t_1), h(t_3)$, respectively, obtained from the same way we get $r_1$. Then we get
\begin{center}$X_{t'_1}(y) \in \hat{N}_{x,r}$, $X_{t'_2}(y) \in \hat{N}_{z_2,r}$, and $X_{[t'_1 , t'_2]}(y) \subset exp_{\sigma}(T_{\sigma}M(1)) $.\end{center} 
Consequently, we have 
$y \in ~\text{Dom}(P_r)$ and so Dom$(P_r) \neq \emptyset$.

\hspace{2mm}
Consider the following set 
$$L =\{(y^c,y^{ss},y^u) \mid y^{ss}=0\}\subset \hat{N}_{z_2,r}.$$  
We will show that for any $\varepsilon>0$ there is $r>0$ satisfying
\begin{equation}
P_r(\hat{N}_{x,r}) \subset \mathcal{C}_{\varepsilon} :=\{(u,w)\in N_{z_2} \mid u\in L , w \in L^{\perp}, \parallel w \parallel\leq \varepsilon \parallel u \parallel\}.
\label{cone1}
\end{equation}
 Let $y\in ~\text{Dom}(P_r) \cap \hat{N}_{x,r}$. Since $P_r(y) \in \hat{N}_{z_2,r}$, we have 
$$0< \Vert z_2^u \Vert -r  \leq \Vert P_r(y)^u\Vert$$
for sufficiently samll $r>0$.  
Using (\ref{coor5}), we get $$\parallel P_r(y)^u \parallel \leq e^{C\tau(y)}\parallel y^u \parallel. $$ Hence $\tau(y) \rightarrow +\infty$ as $\parallel y^u \parallel \rightarrow 0$.
 On the other hand, we have
 \begin{equation}\frac{\parallel P_r(y)^{ss}\parallel }{\parallel P_r(y)^c \parallel}\leq e^{\parallel B_2 \parallel\tau(y)}\frac{\parallel y^{ss}\parallel}{\parallel  e^{B_1\tau(y)}y^c \parallel }.
 \label{inequ1}
 \end{equation}  Since $x^c\neq 0$, we get $y^c \not\rightarrow 0$ as $y \rightarrow x$. In addition, because $E^c\oplus E^{ss}$ is a dominated splitting, the  right side of (\ref{inequ1}) tends zero as $\tau(y) \rightarrow +\infty$, and (\ref{cone1}) is proved.

\hspace{2mm}
Next we perturb $X$ so that if $z_1=X_{t'}(z_2)$ then $\Psi_t(L) \cap \Delta^s=\emptyset  $, where $\Delta^s=N_{z_1}\cap T_{z_1}W^s(\sigma)$. If $\Psi_{t'}(L) \not\subset \Delta^s$ we have nothing to prove. Otherwise, let $u \in N_{z_1}$ be such that $u \not\in \Delta^s$. Fix $\alpha>0$, and denote $u_{\alpha}=\alpha u+(1-\alpha) v$, where $\Psi_{t'}(L)=\rm{Span}\{v\}$. Then there is a linear map $H_{\alpha}:N_{z_1} \rightarrow N_{z_1}$ such that 

$$
H_{\alpha}(v) = u_{\alpha} ~{\text{and}}~ \parallel H_{\alpha} \parallel \rightarrow 1 ~ {\text{as}}~  \alpha    
 \rightarrow 0.
$$

Define  a map 
$$\Psi':N_{z_2} \rightarrow N_{z_1},~ \Psi'(v)=H_{\alpha}\circ\Psi_{t'}(v).$$ Choose $\alpha>0$ so small that we  can use Lemma 2.3, and replace  $\Psi_{t'}$ with $\Psi'$. Then we get  $$\Psi'(L) \cap \Delta^s= \text{Span}\{u_{\alpha}\} \cap \Delta^s= \{\bold{0}\}.$$ Since the \poincare map $f_{z_1,t}:\hat{N}_{z_2,r} \rightarrow \hat{N}_{z_1,r}$ is continuous, there is $\varepsilon>0$ such that   
$$f_{z_1,t}(\mathcal{C}_{\varepsilon})\cap W^s(\sigma)\cap\hat{N}_{z_1,r}=\{z_1\},$$
where $\mathcal{C}_{\varepsilon} $ is defined in (\ref{cone1}). Let $r>0$  be such that $r$ satisfies (\ref{cone1}) for $\varepsilon$, and let $\delta>0$ be a corresponding constant for $\varepsilon'=min\{r,\varepsilon , \eta \}$ obtained from the shadowing property of $\Lambda$.
 Consider the $\delta$-pseudo orbit (\ref{pseudo1}) we constructed in the above. Then there are $y\in M$ and $h \in$ Rep such that $$d(X_{h(t)}
(y),x_0*t)<\varepsilon'.$$
This implies that there are constants $0<t_1<t_2<t_3<t_4$ satisfying 
$$\begin{array}{l}d(X_{h(t_1)}(y),x)<\varepsilon',
d(X_{[h(t_1),h(t_2))}(y),X_{[0,m)}(x))<\varepsilon',\\
d(X_{[h(t_2),h(t_3))}(y),X_{[-m,0]}(z_2))<\varepsilon',
d(X_{[h(t_3),h(t_4)]}(y),X_{[0,t']}(z_2))<\varepsilon',\text{ and}\\
d(X_{[h(t_4),\infty)}(y),X_{[0,\infty)}(z_1))<\varepsilon'.\\
\end{array}$$ 
Without loss of generality, we may assume that \begin{center}$X_{h(t_1)}(y) \in \hat{N}_{x,r}$, $X_{t_3}(y) \in \hat{N}_{z_2,r}$, and $X_{h(t_4)}(y) \in \hat{N}{z_1,r}$.\end{center} This means that $X_{t_3}(y)=P_r(X_{h(t_1)}(y))$, and so we have $X_{h(t_3)}(y) \in \mathcal{C_{\varepsilon}}$. Consequently, we get 
$$X_{h(t_4)} \not\in W^s(\sigma)\cap \hat{N}_{z_1,r}.$$ This is a contradiction to the fact that  $$d(X_{[h(t_4),\infty)}(y),X_{[0,\infty)}(z_1))<\eta,$$
and so  completes the proof.
 \end{proof}

\begin{proposition}\label{pr34}
Let $\Lambda$ be a  chain transitive set. If $\Lambda$ is robustly shadowable, then it admits a homogeneous dominated splitting for $\Psi_t$.
\end{proposition}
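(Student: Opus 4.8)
The plan is to argue by contradiction, feeding Liao's quasi--hyperbolic orbit arc machinery with the uniform estimates over periodic orbits that Proposition~\ref{hypesing} makes available, and then to read off homogeneity from transitivity.

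First I would assemble the structural facts. By Lemma~3.1 the set $\Lambda$ is transitive, and by Proposition~3.3 it contains no singularity of $X$, so $\Lambda\subset M_X$ and the linear \poincare flow $\Psi_t$ is defined on all of $\mathcal{N}_\Lambda$. Fix an isolating block $U$ of $\Lambda$. By Proposition~\ref{hypesing} there is a $C^1$--neighborhood $\nx$ of $X$ such that, for every $Y\in\nx$, every singularity and every periodic orbit of $Y_t$ lying in $U$ is hyperbolic; since any periodic orbit of $Y_t$ inside $U$ belongs to $\Lambda_Y=\bigcap_{t\in\R}Y_t(U)$, the hypothesis of Proposition~\ref{liao1} is satisfied, and it produces a smaller $C^1$--neighborhood $\tilde{\nx}\subset\nx$ of $X$ together with uniform constants $\tilde{\eta}>0$ and $\tilde{T}>1$ controlling the contraction and expansion of the \poincare flow along every periodic orbit of every $Y\in\tilde{\nx}$ contained in $U$. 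A short preliminary step is that all such periodic orbits share a common index $p$: two of them with distinct indices could, since $\Lambda$ is transitive and robustly shadowable, be linked through $\Lambda$ by shadowed pseudo--orbits, and a version of Franks' lemma for linear \poincare flows (Lemma~2.3) would then create a non--hyperbolic periodic orbit of some $Y\in\nx$ in $U$, contradicting Proposition~\ref{hypesing}.

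Now suppose, for contradiction, that $\Lambda$ admits no $l$--dominated splitting of index $p$ for any $l>0$. The failure of domination of index $p$ means exactly that, for arbitrarily small $\eta>0$ and arbitrarily large $T$, one can find in $\Lambda$ orbit arcs $X_{[0,\tau]}(x)$ of arbitrarily large length that are $(\eta,T,p)$--quasi--hyperbolic for $X_t$ (otherwise a uniform lower bound on the spectral gap would yield an $l$--dominated splitting); this extraction is a selecting--lemma computation for the cocycle $\Psi_t$ over the compact set $\Lambda$. Since $\Lambda$ is chain transitive, close such an arc up by a single small jump inside $\Lambda$; since $\Lambda$ is robustly shadowable it is in particular shadowable for $X_t$, so, the arc being quasi--hyperbolic with end points as close as we wish, Proposition~\ref{quasi} yields a genuine hyperbolic periodic orbit $P$ of $X_t$ whose orbit $\varepsilon$--shadows $X_{[0,\tau]}(x)$. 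For $\varepsilon$ small, $P\subset U$, $X\in\tilde{\nx}$, and the \poincare flow along $P$ inherits the weak hyperbolicity (constant comparable to $\eta$) of the shadowed arc, so $P$ violates the uniform estimates (i)--(ii) of Proposition~\ref{liao1} once $\eta<\tilde{\eta}$ and $\tau$ is large. This contradiction shows that $\Lambda$ admits a dominated splitting $\mathcal{N}_\Lambda=\Delta^s\oplus\Delta^u$ for $\Psi_t$. Finally, a dominated splitting has continuous subbundles, so $\dim\Delta^s_x$ is locally constant on $\Lambda$; being constant along orbits and $\Lambda$ being transitive, it is constant on the whole of $\Lambda$, i.e.\ the splitting is homogeneous.

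The main obstacle I expect is the first half of the contradiction step: converting ``no dominated splitting of index $p$'' into long $(\eta,T,p)$--quasi--hyperbolic orbit arcs lying in $\Lambda$ whose end points can be pushed arbitrarily close and whose defect constant $\eta$ stays below the Proposition~\ref{liao1} threshold $\tilde{\eta}$, so that the periodic orbit furnished by Proposition~\ref{quasi} really does contradict the uniform estimates. This needs careful bookkeeping with Liao's selecting lemma, together with Proposition~\ref{PR} and Lemma~2.3 to keep all auxiliary vector fields inside $\tilde{\nx}$; I would package it as a separate lemma before assembling the argument above.
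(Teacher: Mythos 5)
Your first paragraph matches the paper's setup (transitivity, absence of singularities, Proposition~\ref{hypesing} feeding Proposition~\ref{liao1} to get uniform constants $\tilde{\eta},\tilde{T}$ over periodic orbits of all nearby fields), but the core contradiction step does not work. First, an $(\eta,T,p)$-quasi-hyperbolic orbit arc is defined only \emph{relative to} a continuous $\Psi_t$-invariant splitting $\mathcal{N}_\Lambda=\Delta^s\oplus\Delta^u$; if you assume for contradiction that $\Lambda$ has no dominated splitting of index $p$, there is no splitting with respect to which your arcs could be quasi-hyperbolic, so the assertion that ``failure of domination means exactly that one can find quasi-hyperbolic arcs'' is not well posed. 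It also reverses the logic of Liao's selecting lemma: that lemma \emph{assumes} a dominated splitting and extracts quasi-hyperbolic arcs when uniform hyperbolicity fails; it is the engine of the step ``from dominated splitting to hyperbolicity'' (Section~4 and the end of the proof of the Main Theorem), not of the step you are proving here. Second, even granting such arcs, the periodic orbit produced by Proposition~\ref{quasi} is one that \emph{satisfies} weak hyperbolic estimates with constant $\eta$; turning that into a violation of Proposition~\ref{liao1} requires exactly the exponent bookkeeping of the hyperbolicity argument, and in any case cannot manufacture a dominated splitting where none was assumed.

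The paper obtains the dominated splitting by a mechanism your proposal omits entirely. Take $x\in\Lambda$ with $\omega(x)=\Lambda$ and apply Pugh's closing lemma to produce vector fields $Y^n\to X$ in $\nx$ with periodic points $p_n\to x$ whose orbit segments converge to those of $x$. Each $O(p_n)$ is hyperbolic by Proposition~\ref{hypesing}, so Proposition~\ref{liao1} yields a \emph{uniform} $l$ such that the linear \poincare flow of $Y^n$ over $O(p_n)$ admits an $l$-dominated splitting. After passing to a subsequence of constant index $k$, the stable and unstable bundles converge to a $\Psi_t$-invariant $l$-dominated splitting over $O(x)$, which extends to $\overline{O(x)}=\Lambda$; homogeneity is exactly the constancy of $k$ along the subsequence (your closing remark about continuity of the subbundles plus transitivity then finishes it). To repair your outline, replace the contradiction step with this closing-lemma-plus-limit argument; the selecting-lemma machinery you describe belongs to the later step of upgrading domination to hyperbolicity.
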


\begin{proof}
If $\Lambda$ is a periodic orbit, then  it admits a dominated splitting for $\Psi_t$ by Proposition \ref{hypesing}. Hence we suppose $\Lambda$ is not a periodic orbit, and take a point $x \in \Lambda$ be such that $\omega(x)=\Lambda$. By applying the Pugh's closing lemma (see \cite{PR}),  we can select a sequence $\{Y^n\}_{n \in \N}\subset \nx$  converging to $X$ such that each $Y^n$ has a periodic point $p_n$ converging to $x$; and  for each $t>0$, the sequence 
$\phi_n : [0,t] \rightarrow M$ given by $\phi_n(s) = Y^n_s(p_n)$ converges to $\phi : [0,t] \rightarrow M,~ \phi(s) =  X_s(x) $. Note that here  $O(p_n)$ is hyperbolic for $Y^n_t$ for every $n$.
Moreover we can see that the period of $p_n$ tends to $\infty$ as $n\rightarrow \infty$. By applying Proposition \ref{liao1}, we can take $l>0$ such that the linear \poincare flow of $Y_n$ over $O(p_n)$ admits an $l$-dominated splitting. By taking a subsequence, if necessary,  we may assume that there is $k \in \N$ such that $ind(p_n)=k$ for all $n \in \N$.

\hspace{2mm}
Let $\{x_k \}$ be a sequence in $\Lambda$ converging to $x$, and let $E(x_k)$ be an $m$-dimensional subspace of $T_{x_k}M$. We say that $E(x_k)$ converges to $E(x)$   if, for each k, there is a basis $\{e^1_{k},\ldots ,e^m_k\}$ of $E(x_k)$ and a basis $\{e^1,\ldots,e^m\}$ of $E(x)$ such that $e^i_k\rightarrow e^i$ for each $i = 1, \cdots, m$. 

\hspace{2mm}
Put
$$\lim_{n \rightarrow \infty}E^s_n(p_n) = \Delta^s(x) ~ \text{and} ~  \lim_{n \rightarrow \infty}E^u_n(p_n)=
\Delta^u(x).$$
For each $t>0$, we denote by
$$
\lim_{n \rightarrow \infty}E^s_n(Y^n_t(p_n)) = \Delta^s_n(X_t(x)) ~  \text{and} ~ \lim_{n \rightarrow \infty}E^u_n(Y^n_t(p_n)) = \Delta^u_n(X_t(x)),
$$
where $T_{Y^n_t(p_n)}M=E^s_n(Y^n_t(p_n))  \oplus E^u_n(Y^n_t(p_n))$.
Then we have
$$
\Delta^{s}(X_t(x)) = \lim_{n \rightarrow \infty}\Delta^{s}_n(Y^n_t(p_n)) = \lim_{n \rightarrow \infty}\Psi^n_t(\Delta^{s}_n(p_n)) = \Psi_t(\Delta^{s}(X_t(x))), ~ \rm{and}  $$
$$\Delta^{u}(X_t(x)) = \lim_{n \rightarrow \infty}\Delta^{u}_n(Y^n_t(p_n)) = \lim_{n \rightarrow \infty}\Psi^n_t(\Delta^{u}_n(p_n)) = \Psi_t(\Delta^{u}(X_t(x))), 
$$ 
where $\Psi^n_t$ is the linear \poincare flow for $Y^n$.
This means that the splitting  $\Delta^s(x) \oplus \Delta^u(x)$ is $\Psi_t$ invariant, and we have $\mathcal N_x=\Delta^s(x)\oplus \Delta^u(x)$. If $t$ is sufficiently large, then we can see that
$$\parallel \Psi_t\mid_{\Delta^s(x)}\parallel\cdot \parallel \Psi_{-t}\mid_{\Delta^u(X_{t}(x))}\parallel = \lim_{n \rightarrow \infty}\parallel \Psi^n_t\mid_{\Delta_n^s(x)}\parallel\cdot \parallel \Psi^n_{-t}\mid_{\Delta_n^u(X_{t}(x))}\parallel\leq \frac{1}{2}.
$$
This means that the orbit $O(x)$ admits a dominated splitting for $\Psi_t$, and so $\Lambda = \overline{O(x)}$ also has a dominated splitting for $\Psi_t$, 
\end{proof}


\section{From dominated splitting to hyperbolicity}

\begin{lemma}
If a chain transitive set $\Lambda$ of $X_t$ is robustly shadowable, then it admits a hyperbolic periodic orbit.
\end{lemma}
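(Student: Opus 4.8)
The plan is to avoid the quasi hyperbolic shadowing of Proposition \ref{quasi} and instead exploit the closed orbits produced by the closing lemma inside the proof of Proposition \ref{pr34}, together with the structural stability of hyperbolic closed orbits. Let $U$ be an isolating block of $\Lambda$ and $\nx$ an admissible neighbourhood of $X$ with respect to $\Lambda$, so that $\Lambda=\bigcap_{t\in\R}X_t(U)=\Lambda_X$ and $X\in\nx$. If $\Lambda$ is a closed orbit, then it is a closed orbit of $X_t$ contained in $\Lambda_X$, so Proposition \ref{hypesing} applied with $Y=X$ already shows it is hyperbolic and we are done. Hence assume $\Lambda$ is not a closed orbit; then (by the previous proposition $\Lambda$ contains no singularity, and) since $\Lambda$ is transitive there is $x\in\Lambda$ with $\omega(x)=\Lambda$, so $x$ is recurrent and $\overline{O(x)}=\Lambda$.

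Next I would run Pugh's closing lemma \cite{PR} at $x$, exactly as in the proof of Proposition \ref{pr34}, obtaining $Y^n\to X$ in $\nx$ and periodic points $p_n$ of $Y^n_t$ with $p_n\to x$. Taking the closing perturbation supported in a small flow box around a single point of $O(x)$ — so that outside that box the $Y^n$-orbit of $p_n$ shadows $O(x)\subset\Lambda$ — one may arrange that $O(p_n)$ lies in the $\varepsilon_n$-neighbourhood of $\overline{O(x)}=\Lambda$ with $\varepsilon_n\to0$; in particular $O(p_n)\subset U$ for all large $n$. Being $Y^n_t$-invariant and contained in $U$, $O(p_n)$ lies in $\Lambda_{Y^n}=\bigcap_{t\in\R}Y^n_t(U)$, so Proposition \ref{hypesing} forces each $O(p_n)$ to be a \emph{hyperbolic} closed orbit of $Y^n_t$ — this is the point where robust shadowability is used. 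Moreover, after shrinking $\nx$ to the neighbourhood $\tilde\nx$ of Proposition \ref{liao1} and discarding finitely many indices, all the $O(p_n)$ are hyperbolic with the same uniform constants $\tilde\eta,\tilde T$.

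Finally I would transfer one of these orbits back to $X$. Fix $\delta_0>0$ with $\bigcup_{y\in\Lambda}B_{\delta_0}(y)\subset U$, and use Proposition \ref{liao1} together with a standard persistence argument for uniformly hyperbolic periodic orbits to fix a radius $r_0>0$ such that any hyperbolic closed orbit of a field in $\tilde\nx$ that is contained in $\bigcup_{y\in\Lambda}B_{\delta_0/2}(y)$ and hyperbolic with the above uniform constants persists, as a hyperbolic closed orbit contained in $\bigcup_{y\in\Lambda}B_{\delta_0}(y)$, under every $C^1$-perturbation of size $<r_0$. Choosing $n$ with $O(p_n)\subset\bigcup_{y\in\Lambda}B_{\delta_0/2}(y)$ and $d_{C^1}(Y^n,X)<r_0$, the continuation of $O(p_n)$ for $X$ is a hyperbolic closed orbit $\gamma$ of $X_t$ with $\gamma\subset\bigcup_{y\in\Lambda}B_{\delta_0}(y)\subset U$; being $X_t$-invariant and contained in $U$, it satisfies $\gamma\subseteq\bigcap_{t\in\R}X_t(U)=\Lambda$, so $\Lambda$ contains the hyperbolic periodic orbit $\gamma$.

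I expect the last transfer to be the main obstacle: since the periods of the $p_n$ tend to infinity, the naive $C^1$-stability neighbourhood of $O(p_n)$ could shrink with $n$, so one genuinely needs the \emph{uniform} hyperbolicity of Proposition \ref{liao1} to produce a stability radius $r_0$ independent of $n$, and only afterwards to take $Y^n$ close enough to $X$. A secondary technical point is invoking the localized form of Pugh's closing lemma in the second paragraph so that the orbits $O(p_n)$ stay inside the isolating block $U$.
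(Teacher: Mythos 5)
Your first two paragraphs are sound and run parallel to what the paper itself does: the reduction to the non-closed-orbit case, the localized closing lemma producing periodic points $p_n$ of $Y^n\to X$ with $O(p_n)\subset U$, hence $O(p_n)\subset\Lambda_{Y^n}$, hence hyperbolic by Proposition \ref{hypesing}, with uniform constants $\tilde\eta,\tilde T$ from Proposition \ref{liao1}. The gap is exactly where you suspected, and the fix you propose does not work. There is no ``standard persistence argument'' giving a $C^1$-stability radius $r_0$ for a hyperbolic closed orbit that is independent of its period: persistence is obtained by applying the implicit function theorem to the Poincar\'e return map $f_{p_n,\pi(p_n)}$, and a $C^1$-perturbation of the vector field of size $r_0$ perturbs that return map by an amount of order $r_0e^{L\pi(p_n)}$ (each time-one Poincar\'e map moves by $O(r_0)$ and one composes on the order of $\pi(p_n)$ of them). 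Since $\pi(p_n)\to\infty$, no fixed $r_0$ survives. Nor does Proposition \ref{liao1} rescue this: its part (ii) controls only \emph{averages over full multiples of the period}, not partial sums over initial segments, so it does not yield estimates of the form $\Vert\Psi^{Y^n}_t\vert_{E^s}\Vert\leq Ce^{-\tilde\eta t}$ with $C$ independent of $n$. The orbits $O(p_n)$ are therefore not uniformly hyperbolic \emph{as sets} with $n$-independent constants, and the structural-stability radius of a hyperbolic set (which does depend only on $(C,\lambda)$ and the angles of the splitting) is not uniformly bounded below along the sequence.

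Converting these averaged, period-scale estimates into a genuine periodic orbit of the limit field $X$ is precisely what Liao's machinery is for: one selects good base times so that a long orbit segment becomes an $(\eta,T,p)$-quasi hyperbolic arc and then closes it with Proposition \ref{quasi}. That is what the paper's proof does --- it uses the nearby hyperbolic periodic orbit of $Y$ only to verify the three quasi-hyperbolicity inequalities along a recurrent orbit arc $X_{[0,t']}(x)$ of $X$ itself (with $d(X_{t'}(x),x)<\delta$), and then invokes Proposition \ref{quasi} to produce the hyperbolic periodic point, which lies in $\Lambda$ by local maximality. So the step you hoped to replace by a soft continuation argument is exactly the point at which Proposition \ref{quasi} is indispensable; as written, your proof does not close without it.
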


\begin{proof}
Let $\Delta^s\oplus \Delta^u$ be the $l$-dominated splitting of $(T_{\Lambda}M, \Psi_t|_{N_{\Lambda}})$ obtained in Proposition \ref{pr34}. By using lemma $3.4$ in \cite{GW} and Theorem \ref{liao1} we may assume that  ${dim\,}(\Delta^s)\leq {dim\,}M-2$. Denote by $$\alpha=min\{\parallel \Psi_t \mid_{N_z}\parallel \mid \, z\in \Lambda, t \in [-3,3] \}.$$
For any $\varepsilon>0$, choose $\varepsilon' \in (0,\frac{\alpha}{2})$, $\delta'>0$, and $Y \in \nx$ having a periodic point $p$ such that
\begin{equation}
\left\{\begin{array}{l}
\text{log}(s+\varepsilon') \leq \text{log}(s) + \varepsilon, \hspace{5mm} \forall s \in [\frac{\alpha}{2},\infty),\\ 
\text{log}(\frac{1}{s-\varepsilon'}) \geq \text{log}(\frac{1}{s}) - \varepsilon, \hspace{5mm} \forall s \in [\frac{\alpha}{2},\infty),\\
\mid\,\parallel \Psi_t\mid_{\Delta^{s(u)}(z)} \parallel-\parallel \Psi'_t\mid_{\Delta_Y^{s(u)}(y)} \parallel\,\mid <\varepsilon', \hspace{5mm} \forall  t \in [-3,3],~ d(z,y)<\delta', ~ z \in \Lambda , ~y \in O(p),\\
d_{H}(O(p),\Lambda)<\delta',\\
\end{array}\right.
\label{equ3}
\end{equation}
where $\Psi$ and $\Psi'$ are linear \poincare flows of $X$ and $Y$, respectively. Since $p$ is a hyperbolic periodic point of $Y_t$, there are $C>0$ and $\lambda \in (0,1)$ such that $$\parallel \Psi'_t\mid_{\Delta_Y^{s}(y)} \parallel \leq C\lambda^t \text{ and }\parallel\Psi'_{-t}\mid_{\Delta_Y^{u}(y)}\parallel\leq C\lambda^t $$  for all $t\geq 0$ and $y \in O(p)$. Denote by $C'={\rm{max}}\{C ,C^{-1}\}$, and
let $\delta$ be a constant as in Proposition \ref{quasi} for the triple $(\varepsilon,T,\eta)=(\varepsilon,1, -(log(c')+\varepsilon)) .$ Because $x$ is a nonwandering point, there is $t'>0$ such that $d(X_{t'}(x),x)<\delta$.  Let  $T_0,...,T_m \in \R$ be such that  
$$0=T_0<T_1<T_2<....<T_m=t'$$
 is a partition  for $[0,t']$ with $T_{i+1}-T_i \in [1,2]$. Let $p_0,...,p_m\in O(p)$ be such that 
$$d(p_j,X_{T_j}(x))< \delta' ~ \rm{for} ~ j=0,...,m.$$
We show that $X_{[0,t']}(x)$ is an $(\varepsilon,T,\eta)$-quasi hyperbolic arc. By using (\ref{equ3}) we have
$$
\begin{array}{l}
\frac{1}{T_k}\displaystyle\sum_{j=1}^k \text{log}\parallel \Psi_{T_j-T_{j-1}}\mid_{\Delta^s(X_{T_{j-1}}(x))}\parallel \leq \frac{1}{T_k}\displaystyle\sum_{j=1}^k \text{log}(\parallel \Psi'_{T_j-T_{j-1}}\mid_{\Delta_Y^s(p_j)}\parallel+\varepsilon')\\
\leq \frac{1}{T_k}\displaystyle\sum_{j=1}^k (\text{log}(\parallel \Psi'_{T_j-T_{j-1}}\mid_{\Delta_Y^s(p_j)}\parallel)+\varepsilon)\leq \frac{1}{T_k}\displaystyle\sum_{j=1}^k\text{log}(C'\lambda^{T_{j}-T_{j-1}})+\frac{k}{T_k}\varepsilon\\
\leq \frac{1}{T_k}\displaystyle\sum_{j=1}^k\text{log}(C'^{T_{j}-T_{j-1}}\lambda^{T_{j}-T_{j-1}})+\frac{k}{T_k}\varepsilon\leq  \displaystyle \text{log}(C')+\varepsilon = -\eta.
\end{array}
$$
For the first and second inequality, we used the properties in (11); for the third inequality, we used the hyperbolicty of $O(p)$; and for the fourth and fifth inequality, we used the property $T_{j}-T_{j-1}\geq 1$. 

\hspace{2mm}
On the other hand, we have $$m(\Psi'_{t}\mid_{\Delta_Y^u(y)})=\frac{1}{\parallel\Psi'_{-t}\mid_{\Delta_Y^{u}(Y_t(y))}\parallel}\geq C^{-1}\lambda^{-t}\geq {C'}^{-1}\lambda^{-t}. $$ 
Hence we get
$$
\begin{array}{l}
\frac{1}{T_m-T_{k-1}}\displaystyle\sum_{j=k}^m \text{log } m \Big( \Psi_{T_j-T_{j-1}}\mid_{\Delta^u(X_{T_{j-1}}(x))}\Big) \\ 
=\frac{1}{T_m-T_{k-1}}\displaystyle\sum_{j=1}^k \text{log}(\frac{1}{\parallel \Psi_{T_{j-1}-T_{j}}\mid_{\Delta^u(X_{T_j}(x))}\parallel}) \\ \geq 
 \frac{1}{T_m-T_{k-1}}\displaystyle\sum_{j=1}^k \text{log}(\frac{1}{\parallel \Psi'_{T_{j-1}-T_{j}}\mid_{\Delta_Y^u(p_j)}\parallel-\varepsilon'}) \\ 
\geq \frac{1}{T_m-T_{k-1}}\displaystyle\sum_{j=1}^k \Big(\text{log}(\frac{1}{\parallel \Psi'_{T_{j-1}-T_{j}}\mid_{\Delta_Y^u(p_j)}\parallel}))-\varepsilon\Big) \\ \geq
 \frac{1}{T_m-T_{k-1}}\displaystyle\sum_{j=k}^m \Big((T_{j-1}-T_j)(\text{log}(C')+ \text{log}(\lambda))\Big)-\frac{m-k+1}{T_m-T_{k-1}}\varepsilon \\ 
\geq -\text{log}(C')-\varepsilon - \text{log}(\lambda)\geq  -(\text{log}(C')+\varepsilon )=\eta.
\end{array}
$$
Similarly we obtain $$
\begin{array}{l}
\text{log} \parallel \Psi_{T_k-T_{k-1}}\mid_{\Delta^s(X_{T_{k-1}}(x))}\parallel-\text{log }m\Big(\Psi_{T_k-T_{k-1}}\mid_{\Delta^u(X_{T_{k-1}}(x)})\Big)\\ \leq \text{log} (C')+(T_k-T_{k-1})\text{log} (\lambda)+\varepsilon - \big(-\text{log} (C')+(-T_k+T_{k-1})\text{log} (\lambda)-\varepsilon\big)\\=
2\text{log }(C') + 2\varepsilon+2(T_k-T_{k-1})\text{log}(\lambda) \\\leq 2\text{log }(C') + 2\varepsilon=-2\eta,
\end{array}
$$
for all $k \in \{1,...,m\}$. Consequently we can see that $\Lambda$ contains a hyperbolic periodic orbit by Proposition \ref{quasi}.
\end{proof}


\begin{proof}[End of proof of main theorem]

Let $\Lambda$ be a chain transitive set, and suppose it is robustly shadowable. Then $\Lambda$ contains a hyperbolic periodic orbit, say $\gamma$,  by Lemma 4.1.  Since $\Lambda$ is transitive, we see that 
$\Lambda \subset C_{X}(\gamma)$ and also $\Lambda \subset H_{X}(\gamma)$. Since $\Lambda$ is compact and the periodic points are dense in $\Lambda$, we may assume that for any $T>0$ there is a periodic point $p$ in $\Lambda$ whose period is bigger than $T$. Then by using the results and techniques in Section 5 of \cite{LTW}, we can show that the dominated splitting $\mathcal{N}_{\Lambda} = \Delta^s \oplus \Delta^u$ is a hyperbolic spliting for $\Psi_t$. Consequently we can see that   $\Lambda$ is hyperbolic for $X_t$ by applying Proposition 2.5.

\hspace{2mm}
The converse is clear by the robust property of hyperbolic sets and the shadowability of the hyperbolic sets, and so completes the proof of our main theorem.
\end{proof}

\smallskip
{\it Acknowledgement.} The second author was supported by the NRF grant funded
by the Korea government (MSIP) (No. NRF-2015R1A2A2A01002437).

\begin{bibdiv}
\begin{biblist}

  \bib{AP}{book}{
   author={V. Ara\'ujo},
   author={M. J. Pacifivo},
   title={Three-dimensional flows},
   publisher={Springer, Berlin},
   date={1992},
 
}

 \bib{BGV}{article}{
   author={C. Bonatti},
   author={N. Gourmelon},
   author={T. Vivier},
   title={Perturbation of derivative along periodic orbits},
   journal={Ergod. Th. $\&$ Dynam. Syst},
   volume={26},
   date={2006},
   pages={1307--1337},

}

\bib{D}{article}{
   author={C. I. Doering},
   title={Persistently transitive vector fields on three- dimensional manifolds},
   journal={Dynam. Syst. $\&$ Bifurcat. Th.},
   volume={160},
   date={1987},
   pages={59--89},

}

\bib{GLT}{article}{
   author={S. Gan},
   author={M. Li},
   author={S. B. Tikhomirov},
   title={Oriented shadowing property and $\Omega$-stability for vector fields},
   journal={J. Dynamics and Differential Equaitons},
   volume={28},
   date={2016},
   pages={225--237},

  }
  
   \bib{GW}{article}{
   author={S. Gan},
   author={L. Wen},
   title={Nonsingular star flows satisfy Axiom A and the no-cycle condition},
   journal={Invent. math.},
   volume={164},
   date={2006},
   pages={279--315},
  
   }

 \bib{LS}{article}{
   author={K. Lee},
   author={K. Sakai},
   title={Structural stability of vector fields with shadowing},
   journal={J. Differential Equations},
   volume={232},
   date={2007},
   pages={303--313},
   
}

 \bib{LTW}{article}{
   author={K. Lee},
   author={L. H. Tien},
   author={X. Wen},
   title={Robustly shadowable chain components of $C^1$ vector fields},
   journal={J. Korean Math. Soc.},
   volume={51(1)},
   date={2014},
   pages={17--53},
  
}

\bib{LGW}{article}{
   author={M. Li},
   author={S. Gan},
   author={L. Wen},
   title={Robustly transitive singular set via approach of an extended linear \poincare flow},
   journal={Discrete Contin. Dyn. Syst.},
   volume={13},
   date={2005},
   pages={239--269},
   
}

 \bib{L}{article}{
   author={S. Liao},
   title={An existence theorem for periodic orbits},
   journal={Acta. Sci. Nat. Univ. Pekin.},
   volume={1},
   date={1979},
   pages={1--20},

}

\bib{PT}{article}{
   author={S. Y. Pilyugin},
   author={S. B. Tikhomirov},
   title={Vector fields with the oriented shadowing property},
   journal={J. Diff. Eqns.},
   volume={248},
   date={2010},
   pages={1345--1375},

}
\bib{PR}{article}{
   author={C. Pugh, C. Robinson},
   title={The $C^1$ closing lemma including Hamiltonians},
   journal={Ergod. Th. $\&$ Dynam. Syst.},
   volume={3},
   date={1983},
   pages={261--313},
  
}

\end{biblist}
\end{bibdiv}

\end{document}